\newcommand{\iaoi}{if and only if }
\newcommand{\mrm}{\mathrm}
\newcommand{\mcl}{\mathcal}
\newcommand{\R}{\text{$\mathbb{R}$}}
\newcommand{\N}{\text{$\mathbb{N}$}}
\newcommand{\FF}{\text{$\mathcal{F}$}}
\newcommand{\eps}{\text{$\varepsilon$}}
\newcommand{\ph}{\text{$\varphi$}}
\DeclareMathOperator*{\flim}{\mathcal{F}-lim}
\newcommand{\abs}[1]{\left\lvert#1\right\rvert}
\newcommand{\uip}[1]{\lceil #1 \rceil}
\newcounter{low}
\renewcommand{\thelow}{\rm (\alph{low})}
\newenvironment{lista}{\begin{list}{\thelow}{\usecounter{low}\setlength{\leftmargin}{3pc}\setlength{\itemsep}{0pc}}\setlength{\labelwidth}{10pc}}{\end{list}}
\newcommand{\refeq}[1]{{\rm (\ref{#1})}}
\numberwithin{equation}{section}
\newcommand{\dd}{\mrm{d}}
\newcommand{\limti}[1]{\lim\limits_{#1\to\infty}}
\newcommand{\emps}{\emptyset}
\newcommand{\sm}{\smallsetminus}
\newcommand{\ud}{\text{$\overline{d}$}}
\newcommand{\uud}{\text{$\overline{\overline{d}}$}}
\newcommand{\ld}{\text{$\underline{d}$}}
\newcommand{\lld}{\text{$\underline{\underline{d}}$}}
\newcommand{\uda}[1]{\text{$\overline{d_{#1}}$}}
\newcommand{\lda}[1]{\text{$\underline{d_{#1}}$}}
\theoremstyle{definition}
\newtheorem{definition}{Definition}[section]
\theoremstyle{plain}
\newtheorem{thm}[definition]{Theorem}
\newtheorem{lem}[definition]{Lemma}
\newtheorem{prop}[definition]{Proposition}
\newtheorem{cor}[definition]{Corollary}
\newtheorem*{prob}{Problem}
\theoremstyle{remark}
\newtheorem{rem}[definition]{Remark}
\begin{document}

\title{Range of density measures}
\author{Martin Sleziak}
\address{Department of Algebra, Geometry and Mathematical Education, Faculty of Mathematics, Physics and Informatics, Comenius University, Mlynsk\'a dolina, 842 48 Bratislava, Slovakia}
\email{\tt sleziak@fmph.uniba.sk}
\thanks{The first author was supported by VEGA grants 1/0519/09 and 1/0588/09}
\author{Milo\v{s} Ziman}
\keywords{asymptotic density, density measure, finitely additive measure}
\subjclass[2000]{Primary: 11B05, 28A12; Secondary: 20B27, 28D05}

\begin{abstract}
We investigate some properties of density measures -- finitely
additive measures on the set of natural numbers $\N$ extending
asymptotic density. We introduce a class of density measures,
which is defined using cluster points of the sequence
$\big(\frac{A(n)}{n}\big)$ as well as cluster points of some other
similar sequences.

We obtain range of possible values of density measures for any subset of $\N$. Our description
of this range simplifies the description of Bhashkara Rao and Bhashkara Rao \cite{brbr}
for general finitely additive measures.
Also the values which can be attained by the measures defined
in the first part of the paper are studied.
\end{abstract}

\maketitle

\section*{Introduction}

We are interested in finitely additive measures defined on the algebra $\mcl{P}(\N)$
of all subsets of $\N$.
By a measure we mean a~function
$\mu:\, \mcl{P}(\N) \to [0,1]$ satisfying the following properties:
\begin{lista}
\item\label{dm0} $\mu(\N) = 1$;
\item\label{dm1} $\mu(A \cup B) = \mu(A) + \mu(B)$ for all disjoint $A, B \subseteq \N$.
\end{lista}

The \emph{asymptotic density} $d$ defined by $d(A) =
\lim\limits_{n \to \infty} \frac{A(n)}{n}$, where $A(n) = \big|A
\cap [1,n]\big|$, is a classical tool for measuring the size of
subsets of $\N$. But unfortunately, it is not defined on all
subsets of $\N$. Moreover, it is well known that the collection of
sets having asymptotic density (the domain of $d$) does not form
an algebra of sets. Let us denote this collection $\mcl{D}$.

Clearly, $\N \in \mcl{D}$ and $d(\N) = 1$.
If $A, B \in \mcl{D}$ and $A \cap B = \emptyset$, then $d(A \cup B) = d(A) + d(B)$.
Hence $d$ possesses both properties \ref{dm0} and \ref{dm1} above and
it is known that it is possible to extend $d$ to a measure.

We will study these extensions, i.e., the measures satisfying:
\begin{lista}
\setcounter{low}{2}
\item\label{dm2} $\mu|_{\mcl{D}} = d$.
\end{lista}
This kind of measure  will be called a \emph{density measure} (in
accordance with \cite{st}).

Existence of density measures was shown already by S.~Banach.
In functional analysis it is usually proved using Hahn-Banach
theorem (see e.g.~\cite[p.141,\S3]{BANACHLINEN}).
We will use a different approach for constructing density
measures, using ultrafilters (see e.g.~\cite[Theorem
8.33]{balste}, \cite[p.207]{hrjech}). Also the general theory of
extensions of a partial finitely additive measure to a measure,
described in detail by Bhashkara Rao and Bhashkara Rao in
\cite{brbr}, is a very convenient tool in this setting.

Dorothy Maharam \cite{maharam} pioneered the research of the
density measures on integers. This field was further studied
by Blass, Frankiewicz, Plebanek and Ryll--Nar\-dzew\-ski in \cite{bfpr}, van Douwen in
\cite{vandouwen} or \v Sal\'at and Tijdeman in \cite{st}. Recently
the density measures and related concept of L\'evy group
have been employed in the theory of social choice \cite{CAMPBELLKELLYASYMP, fey, lauwers, TOMATRADEOFF}.

Let us note that at least some form of axiom of choice is needed in the construction of finitely additive measures on $\N$,
since there exists a model of ZF constructed by Pincus and Solovay \cite{PINCUSSOLOVAY1977}
in which there are no nonprincipal finitely additive measures on $\N$, see also \cite{HOWARDRUBIN}.
(It was mistakenly stated in \cite{GREDENSSURV} that Buck's measure \cite{BUCK} yields
an effective construction of a density measure.)

\section{Expressions of density measures}

We start by describing the construction of density measures via
ultrafilters.

We first recall the notion of \emph{limit along a filter} (see
\cite[p.122, Definition 8.23]{balste}, \cite[p.206, Definition
2.7]{hrjech}). If $\FF$ is a filter on $\N$ and $(x_n)$ is a real
sequence then we say that $\flim x_n=L$ if $L$ is a real number
with the property
$$\{n; \abs{x_n-L}>\eps\} \in \FF$$
for each $\eps>0$.

We recall here some basic (and easy to show) properties of the
$\FF$-limit, which will be needed later.
\begin{lem}
Let $\FF$ be a free filter on $\N$ and $(x_n)$ be a real sequence.
\begin{enumerate}
\renewcommand{\theenumi}{\roman{enumi}}
\renewcommand{\labelenumi}{(\theenumi)}
  \item If $\limti n x_n=L$ then $\flim x_n=L$.
  \item If $\flim x_n$ exists, then $\liminf x_n \leq \flim x_n \leq \limsup x_n$.
  \item The $\FF$-limits are unique.
  \item $\flim (ax_n+by_n)=a\flim x_n+b\flim y_n$ $($provided the $\FF$-limits of $(x_n)$ and $(y_n)$ exist$)$.
  \item $\flim (x_n.y_n)=\flim x_n . \flim y_n$ $($provided the $\FF$-limits of $(x_n)$ and $(y_n)$ exist$)$.
  \item For every cluster point $c$ of the sequence $(x_n)$ there exists
    a free filter $\FF$ such that $\flim x_n=c$. On the other hand, if $\flim
    x_n$ exists, it is a cluster point of the sequence $(x_n)$.
  \item $\limti n x_n=L$ \iaoi $\flim x_n=L$ for every free ultrafilter $\FF$.
  \item If $(x_n)$ is bounded an $\FF$ is an ultrafilter, then $\flim x_n$ exists.
\end{enumerate}
\end{lem}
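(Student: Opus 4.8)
The plan is to treat the eight items in three groups, reserving the genuinely ultrafilter-theoretic work for the end. Throughout I would rely on the basic observation that every member of a filter is nonempty and that every member of a \emph{free} filter is in fact infinite, since a free filter contains no finite set; in particular, for a free $\FF$ the fact that $\{n;\,\abs{x_n-L}<\eps\}\in\FF$ forces this set to be infinite. It is the ``close'' sets $\{n;\,\abs{x_n-L}<\eps\}$ that I track, as these are the ones required to be large.

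For (i), if $\limti n x_n = L$ then for each $\eps>0$ the set $\{n;\,\abs{x_n-L}\ge\eps\}$ is finite, so its complement is cofinite and therefore belongs to the free filter $\FF$; this gives $\flim x_n = L$. Items (ii) and (iii) I would prove by contradiction: if $\flim x_n = L$ but $L>\limsup x_n$, choose $\eps$ with $L-\eps>\limsup x_n$, so $\{n;\,x_n>L-\eps\}$ is finite; since by upward closure it would contain the member $\{n;\,\abs{x_n-L}<\eps\}\in\FF$, it would itself lie in $\FF$, contradicting freeness. The lower bound is symmetric, and uniqueness follows because two distinct limits $L_1,L_2$ would force the disjoint sets $\{n;\,\abs{x_n-L_i}<\abs{L_1-L_2}/2\}$ both to lie in $\FF$, whose intersection must be nonempty. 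Items (iv) and (v) are the standard $\eps$-estimates, where the only filter-theoretic ingredient is that the intersection of the two relevant members of $\FF$ (one controlling $x_n$, one controlling $y_n$) again lies in $\FF$; for the product one first restricts to a member of $\FF$ on which $x_n$ is bounded by $\abs{\flim x_n}+1$.

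For (vi), given a cluster point $c$ I would fix a subsequence $(x_{n_k})$ converging to $c$, set $S=\{n_k;\,k\in\N\}$, and take $\FF=\{A\subseteq\N;\,S\sm A\text{ is finite}\}$, the trace-type filter associated with $S$. One checks directly that $\FF$ is a free filter (it contains every cofinite set and no finite set), and that for each $\eps>0$ the set $\{n;\,\abs{x_n-c}<\eps\}$ omits only finitely many elements of $S$, hence lies in $\FF$, giving $\flim x_n=c$. The converse is immediate from the opening observation: if $\flim x_n=c$ then each $\{n;\,\abs{x_n-c}<\eps\}$ is an infinite member of $\FF$, so $c$ is approached infinitely often and is therefore a cluster point.

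The remaining two items carry the real content, and I expect (viii) to be the main obstacle. For (vii), the forward implication is just (i) applied to ultrafilters; for the converse I would argue contrapositively: if $x_n\not\to L$ there are $\eps_0>0$ and an infinite set $S=\{n;\,\abs{x_n-L}\ge\eps_0\}$, and extending the free filter $\{A;\,S\sm A\text{ finite}\}$ to a free ultrafilter $\UU$ (via the ultrafilter lemma) puts $S\in\UU$, whence $\{n;\,\abs{x_n-L}<\eps_0\}\subseteq\N\sm S\notin\UU$ and $\flim x_n\neq L$ along $\UU$. For (viii), boundedness gives $x_n\in[a,b]$, and I would run a bisection: at each stage the current interval splits into two halves whose preimages partition, up to the shared endpoint, the current $\UU$-large index set, and the dichotomy ``$A\in\UU$ or $\N\sm A\in\UU$'' selects exactly one half whose preimage stays in $\UU$. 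This yields nested intervals of lengths tending to $0$ with a common point $L$, and since for every $\eps$ some such interval lies within $\eps$ of $L$ with preimage in $\UU$, we get $\flim x_n=L$. The delicate point is precisely the repeated use of the dichotomy to keep exactly one half large while verifying the nested preimages remain in $\UU$; alternatively I might set $L=\sup\{t;\,\{n;\,x_n>t\}\in\UU\}$ and check the limit property directly.
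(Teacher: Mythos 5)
Your proposal is correct, and in fact the paper supplies no proof of this lemma at all --- it calls the properties ``basic (and easy to show)'' and defers to the cited textbooks --- so there is no argument of the authors' to compare against. Your choices are exactly the standard ones and all check out: cofinite sets belong to every free filter (hence (i) and the finiteness contradictions in (ii)), disjoint filter members give uniqueness, the trace filter $\{A;\,S\sm A\text{ finite}\}$ of a convergent subsequence handles (vi), extending that filter over $S=\{n;\,\abs{x_n-L}\ge\eps_0\}$ handles (vii), and the ultrafilter dichotomy applied to a bisection (or equivalently $L=\sup\{t;\,\{n;\,x_n>t\}\in\UU\}$) handles (viii). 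One remark: the paper's displayed definition of $\flim$ reads $\{n;\,\abs{x_n-L}>\eps\}\in\FF$, which is evidently a misprint for $\{n;\,\abs{x_n-L}<\eps\}\in\FF$; you correctly work with the intended definition throughout.
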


Using the above properties of $\FF$-limit one can show that for
any free ultrafilter $\FF$ on $\N$ a density measure $\mu_{\FF}$ can be
defined by
$$\mu_{\FF}(A)=\flim \frac{A(n)}n.$$
(We refer again to \cite[Theorem 8.33]{balste},
\cite[p.207]{hrjech} for the proof of this claim.)

A short notice of Lauwers in \cite{lauwers} claims:

{\em Every density measure can be expressed in the form
\begin{gather}\label{lauwers eq}
\mu_\ph(A) = \int_{\beta\N^*}\flim \frac{A(n)}{n}\, \dd\ph(\FF), \qquad A \subseteq \N
\end{gather}
for some probability Borel measure $\ph$ on the set of all
free ultrafilters $\beta\N^*$.}\\
But unfortunately our next considerations show that this result is
not correct.

\section{Density measures from $\alpha$-densities}

In this section we will consider another class of density
measures. In order to define them we need to recall the definition
of $\alpha$-densities.

For $\alpha \geq -1$ and $A \subseteq \N$ we denote
$A_\alpha(n) = \sum\limits_{k=1}^{n}\chi_A(k) k^\alpha$
and by $\mcl{D}_\alpha$ the set of all sets $A \subseteq \N$ such that
the sequence $\Big( \frac{A_\alpha(n)}{\N_\alpha(n)}\Big)$ has a limit. The limit of this
sequence we denote $d_\alpha(A)$ and we will call it the {\em $\alpha$-density} of the set $A$, i.e.,
$d_\alpha(A) = \lim\limits_{n \to \infty} \frac{A_\alpha(n)}{\N_\alpha(n)}$.
Hence for $\alpha = 0$ we have the asymptotic density and for $\alpha = -1$ the logarithmic
density.

As usual, by $\ld$ and $\ud$ we will denote the {\em lower\/}
and the {\em upper asymptotic density\/}, respectively, i.e.,
$\ld(A) = \liminf\limits_{n \to \infty} \frac{A(n)}{n}$ and
$\ud(A) = \limsup\limits_{n \to \infty} \frac{A(n)}{n}$. Similarly, we will call the functions
$\lda{\alpha}(A) = \liminf\limits_{n \to \infty} \frac{A_\alpha(n)}{\N_{\alpha }(n)}$
and $\uda{\alpha}(A) = \limsup\limits_{n \to \infty} \frac{A_\alpha(n)}{\N_{\alpha }(n)}$
the {\em lower\/} and the {\em upper $\alpha$-density\/}.

The following theorem is a consequence of the result of Fuchs and Giuliano Antonini in \cite{fga}.

\begin{thm}\label{fg thm}
Let $\alpha > -1$ and $f: \N \to \R$ be a bounded arithmetic function.
If
\begin{gather*}
\lim_{n \to \infty} \frac{1}{\N_\alpha(n)}\sum_{k=1}^{n}f(k)k^\alpha = L,
\end{gather*}
then
\begin{gather*}
\lim_{n \to \infty} \frac{1}{\N_\beta(n)}\sum_{k=1}^{n}f(k)k^\beta = L
\end{gather*}
for any $\beta \geq -1$.
\end{thm}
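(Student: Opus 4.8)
The plan is to reduce to the case $L=0$ and then transfer from the weight $k^\alpha$ to the weight $k^\beta$ by summation by parts. First I would replace $f$ by $f-L$: since every constant has $\gamma$-mean equal to itself, i.e. $\frac{1}{\N_\gamma(n)}\sum_{k=1}^{n}Lk^\gamma = L$ for each $\gamma\geq -1$, the hypothesis becomes $\frac{1}{\N_\alpha(n)}\sum_{k=1}^{n}f(k)k^\alpha\to 0$, and the goal becomes the analogous statement with $\beta$. Writing $S_\gamma(n)=\sum_{k=1}^{n}f(k)k^\gamma$, the hypothesis now reads $S_\alpha(n)=o(\N_\alpha(n))$ and, since $\alpha>-1$ forces the elementary asymptotics $\N_\alpha(n)=\sum_{k=1}^{n}k^\alpha\sim\frac{n^{\alpha+1}}{\alpha+1}$, this is just $S_\alpha(n)=o(n^{\alpha+1})$. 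The strict inequality $\alpha>-1$ (rather than $\alpha\geq -1$) enters exactly here: at $\alpha=-1$ the normalization is logarithmic and this power-type asymptotic fails, so one could not run the argument from a logarithmic source.

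Next I would use $f(k)k^\beta=\bigl(f(k)k^\alpha\bigr)\,k^{\beta-\alpha}$ and apply Abel summation with partial sums $S_\alpha(k)$, obtaining
\[
S_\beta(n)=S_\alpha(n)\,n^{\beta-\alpha}+\sum_{k=1}^{n-1}S_\alpha(k)\bigl(k^{\beta-\alpha}-(k+1)^{\beta-\alpha}\bigr).
\]
The boundary term is $o(n^{\alpha+1})\cdot n^{\beta-\alpha}=o(n^{\beta+1})=o(\N_\beta(n))$ when $\beta>-1$, and $o(1)=o(\ln n)=o(\N_{-1}(n))$ when $\beta=-1$, so it is negligible in every case. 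For the remaining sum the mean value theorem gives $\bigl|k^{\beta-\alpha}-(k+1)^{\beta-\alpha}\bigr|=O(k^{\beta-\alpha-1})$, so each summand is $o(k^{\alpha+1})\cdot O(k^{\beta-\alpha-1})=o(k^{\beta})$, uniformly in the sign of $\beta-\alpha$; thus the same computation covers both $\beta>\alpha$ and $\beta<\alpha$.

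The one genuine obstacle is to turn the termwise estimate into $\sum_{k=1}^{n-1}o(k^{\beta})=o(\N_\beta(n))$. Writing $S_\alpha(k)=\eps_k\N_\alpha(k)$ with $\eps_k\to 0$, the sum is a weighted average of the null sequence $(\eps_k)$; splitting at an index $K$ beyond which $|\eps_k|<\delta$ and comparing $\sum_{k\leq n}k^\beta$ with $\N_\beta(n)$ (which behaves like $n^{\beta+1}/(\beta+1)$ for $\beta>-1$ and like $\ln n$ for $\beta=-1$) bounds it by $C\delta\,\N_\beta(n)+O(1)$, and letting $\delta\to 0$ finishes the proof. This is precisely the Silverman--Toeplitz regularity condition, and the extra bookkeeping needed at the logarithmic endpoint $\beta=-1$ is the only delicate point. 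I would note that boundedness of $f$ does not actually seem to be needed for this implication, since the argument uses only $S_\alpha(n)=o(\N_\alpha(n))$; the theorem as stated therefore follows a fortiori. Alternatively, one may simply invoke the Abelian theorem of Fuchs and Giuliano Antonini \cite{fga} directly.
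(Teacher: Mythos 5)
Your argument is correct, but it is genuinely different from what the paper does: the paper offers no proof at all for this statement, simply declaring it a consequence of the Abelian theorem of Fuchs and Giuliano Antonini \cite{fga} (which is also your fallback option in the last sentence). Your Abel-summation proof is a legitimate, self-contained, elementary alternative: the reduction to $L=0$ is harmless since constants have $\gamma$-mean $L$ for every $\gamma\ge-1$; the summation-by-parts identity is exact; the boundary term and the telescoped sum are each $o(\N_\beta(n))$ by the asymptotics of Lemma \ref{alpha int} together with the mean value theorem bound $\abs{k^{\beta-\alpha}-(k+1)^{\beta-\alpha}}=O(k^{\beta-\alpha-1})$; and the final step $\sum_{k\le n}\eps_k k^\beta=o(\N_\beta(n))$ for $\eps_k\to0$ is exactly the regularity of a weighted-mean method with nonnegative weights and divergent norming sequence, valid for all $\beta\ge-1$ including the logarithmic endpoint. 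Your two side observations are also sound: the hypothesis $\alpha>-1$ is used precisely to convert $o(\N_\alpha(n))$ into $o(n^{\alpha+1})$, and the statement genuinely fails from a logarithmic source (the set $\bigcup_k(2^{2k},2^{2k+1}]$ used later in the paper has logarithmic density $1/2$ but no asymptotic density); and boundedness of $f$ is indeed not consumed by your argument, since only $S_\alpha(n)=o(\N_\alpha(n))$ enters. What the citation buys the paper is brevity and access to the general theory of weighted densities; what your proof buys is transparency about exactly which hypotheses are doing work.
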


Replacing the function $f$ by the characteristic function of a set $A$ we get

\begin{cor}\label{alpha => beta}
If $A \in \mcl{D}_\alpha$ for some $\alpha > -1$, i.e., the $\alpha$-density
$d_\alpha(A) = \lim\limits_{n \to \infty} \frac{A_\alpha(n)}{\N_\alpha(n)}$
of a set $A$ exists, then $A \in \mcl{D}_\beta$ for all $\beta \geq -1$,
and $d_\alpha(A) = d_\beta(A)$.
\end{cor}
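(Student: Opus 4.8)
The plan is to obtain this corollary as a direct specialization of Theorem~\ref{fg thm}. Since that theorem is phrased for an arbitrary bounded arithmetic function $f$, the only work is to check that the characteristic function $\chi_A$ fits its hypotheses and that the two resulting limit statements are precisely the $\alpha$- and $\beta$-density conditions for the set $A$.

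First I would set $f = \chi_A$. This function takes values in $\{0,1\}$, so it is bounded, and the hypothesis of Theorem~\ref{fg thm} is available for any $\alpha > -1$. Next I would record the bookkeeping identity $\sum_{k=1}^{n}\chi_A(k)k^\alpha = A_\alpha(n)$, which holds by the very definition of $A_\alpha(n)$. Consequently the quotient $\frac{1}{\N_\alpha(n)}\sum_{k=1}^{n}\chi_A(k)k^\alpha$ coincides with $\frac{A_\alpha(n)}{\N_\alpha(n)}$, so the assumption $A \in \mcl{D}_\alpha$ is exactly the statement that this quotient converges, with limit $L = d_\alpha(A)$.

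Applying Theorem~\ref{fg thm} then yields $\lim_{n\to\infty}\frac{1}{\N_\beta(n)}\sum_{k=1}^{n}\chi_A(k)k^\beta = L$ for every $\beta \geq -1$. Reading the left-hand side back through the same identity as $\frac{A_\beta(n)}{\N_\beta(n)}$ shows that this limit exists, i.e.\ $A \in \mcl{D}_\beta$, and that $d_\beta(A) = L = d_\alpha(A)$, which is the claim.

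There is essentially no obstacle here: the entire analytic content is carried by Theorem~\ref{fg thm}, and the corollary is merely a translation of that result into the language of $\alpha$-densities of sets. The only point that warrants a moment's care is the asymmetry of the quantifiers: the hypothesis is imposed for \emph{some} $\alpha$ strictly greater than $-1$, which is exactly what the theorem requires, whereas the conclusion is drawn for \emph{all} $\beta \geq -1$, including the logarithmic case $\beta = -1$.
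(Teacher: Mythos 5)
Your proposal is correct and matches the paper exactly: the paper derives this corollary by the single remark ``Replacing the function $f$ by the characteristic function of a set $A$,'' which is precisely your specialization $f=\chi_A$ together with the identity $\sum_{k=1}^{n}\chi_A(k)k^\alpha=A_\alpha(n)$. Nothing further is needed.
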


\begin{cor}
For all $\alpha > -1$ we have $\mcl{D}_\alpha = \mcl{D}$ and $d_\alpha = d$.
\end{cor}

This means that by replacing the sequence $\big(\frac{A(n)}{n}\big)$ in \refeq{lauwers eq} by
the sequence $\big(\frac{A_\alpha(n)}{\N_\alpha(n)}\big)$ for some $\alpha > -1$ we get a
density measure. By well-known inequality
$$\lda{-1} \leq \ld \leq \ud \leq \uda{-1}$$
(see \cite[p.241,Lemma V.2.1]{HALBERSTAMROTH}, \cite[p.272]{ten})  we get $\mcl{D}
\subseteq \mcl{D}_{-1}$ and $d_{-1}|_{\mcl{D}} = d$. Therefore
$d_{\alpha}$ is an extension of $d$ for $\alpha=-1$, too.

In particular, if we fix some $\alpha \geq -1$ and some free ultrafilter $\FF$, then the
mapping $A \mapsto \flim \frac{A_\alpha(n)}{\N_\alpha(n)}$ defines a density measure. Let us
denote this density measure by $\mu_{\alpha}^{\FF}$.

The following lemma can be useful for evaluating $\alpha$-densities.
\begin{lem}\label{alpha int}
For all $\alpha > -1$ we have
\begin{gather*}
\lim_{n \to \infty} \frac{n^{\alpha + 1}}{\N_\alpha(n)} = \alpha+1,
\end{gather*}
and for $\alpha = -1$
\begin{gather*}
\lim_{n \to \infty} \frac{\ln(n)}{\N_{-1}(n)} = 1.
\end{gather*}
\end{lem}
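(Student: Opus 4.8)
The plan is to observe that, since the characteristic function of $\N$ is identically $1$, we have $\N_\alpha(n) = \sum_{k=1}^{n} k^\alpha$, so both claims are purely asymptotic statements about this generalized harmonic sum: we must show $\N_\alpha(n) \sim \frac{n^{\alpha+1}}{\alpha+1}$ for $\alpha > -1$ and $\N_{-1}(n) \sim \ln n$. The most economical route is the Stolz--Ces\`aro theorem, which avoids any delicate handling of improper integrals.

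For $\alpha > -1$, I would set $a_n = \N_\alpha(n)$ and $b_n = n^{\alpha+1}$; the sequence $(b_n)$ is strictly increasing and tends to $\infty$. The difference quotient is
$$\frac{a_n - a_{n-1}}{b_n - b_{n-1}} = \frac{n^\alpha}{n^{\alpha+1} - (n-1)^{\alpha+1}}.$$
Writing $n^{\alpha+1} - (n-1)^{\alpha+1} = n^{\alpha+1}\bigl(1 - (1 - \tfrac1n)^{\alpha+1}\bigr)$ and using the expansion $(1-\tfrac1n)^{\alpha+1} = 1 - \frac{\alpha+1}{n} + O(\tfrac1{n^2})$, the denominator is asymptotic to $(\alpha+1)n^\alpha$, so the quotient tends to $\frac{1}{\alpha+1}$. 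Stolz--Ces\`aro then gives $\frac{\N_\alpha(n)}{n^{\alpha+1}} \to \frac{1}{\alpha+1}$, and inverting yields the first limit.

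For $\alpha = -1$, the same scheme applies with $a_n = \N_{-1}(n) = \sum_{k=1}^{n}\frac1k$ and $b_n = \ln n$: the quotient is $\frac{1/n}{\ln n - \ln(n-1)} = \frac{1/n}{-\ln(1 - 1/n)}$, which tends to $1$ because $-\ln(1-\tfrac1n) \sim \tfrac1n$. Hence $\frac{\N_{-1}(n)}{\ln n} \to 1$ and the second limit follows.

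A fully elementary alternative is integral comparison: by the monotonicity of $x \mapsto x^\alpha$ one sandwiches $\N_\alpha(n)$ between $\int_0^n x^\alpha\,\dd x$ and $\int_1^{n+1} x^\alpha\,\dd x$ (and the harmonic sum between $\ln(n+1)$ and $1 + \ln n$), both of which are asymptotic to the target. The only point requiring care here is the range $-1 < \alpha < 0$, where $x^\alpha$ is unbounded near $0$ and one must check that the improper integral $\int_0^1 x^\alpha\,\dd x$ is finite and that the endpoint term at $k=1$ is correctly bounded; this is the main (and only mildly technical) obstacle, and it is precisely what the Stolz--Ces\`aro argument circumvents.
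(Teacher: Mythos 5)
Your proof is correct and follows essentially the route the paper itself indicates: the paper leaves the lemma as a routine exercise, explicitly suggesting either Riemann-sum comparison with $\int x^\alpha\,\dd x$ or Stolz's theorem, and you have simply carried out the Stolz--Ces\`aro argument in full (with the integral comparison as a correctly flagged alternative). Nothing further is needed.
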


The routine proof can be done for example by interpreting the
sums appearing in the definition of $\N_\alpha(n)$ as the lower
and upper Riemann sums for integral of the function $x^\alpha$ or by using
Stolz theorem.

Now we will show that for every $\alpha > 0$, there is a free ultrafilter $\FF$
such that $\mu_\alpha^\FF$ is different from all density measures $\mu_\ph$ expressible
by \refeq{lauwers eq}.

Since the value of $\flim \frac{A(n)}{n}$ is a cluster point
of the sequence $\frac{A(n)}{n}$, we see that $\ld(A) \leq \flim \frac{A(n)}{n}
\leq \ud(A)$ for all free ultrafilters $\FF$ and all $A \subseteq \N$, and consequently
$\ld \leq \mu_\ph \leq \ud$ for every probability Borel measure $\ph$ on
$\beta\N^*$.

Finally we are ready to present a counterexample to the Lauwers' assertion:
Let $A = \bigcup\limits_{k=0}^\infty \big(2^{2k}, 2^{2k + 1}\big] \cap \N$.
Similarly as in Lemma \ref{alpha int} one can show that:
\begin{gather*}
\begin{aligned}
\lda\alpha(A) &= \lim\limits_{k \to \infty}\frac{A_\alpha(2^{2k})}{\N_\alpha(2^{2k})} =
\lim_{k \to \infty}\frac{\sum_{i=0}^{k-1}\int_{2^{2i}}^{2^{2i + 1}} x^\alpha\, \dd x}
{\frac{(2^{2k})^{\alpha + 1}}{\alpha +1}}
=\frac{2^{\alpha+1} - 1}{2^{2(\alpha+1)}-1} = \frac{1}{2^{\alpha+1}+1}, \\
\uda\alpha(A) &= \lim\limits_{k \to \infty}\frac{A_\alpha(2^{2k+1})}{\N_\alpha(2^{2k+1})} =
\lim_{k \to \infty}\frac{\sum_{i=0}^{k}\int_{2^{2i}}^{2^{2i + 1}} x^\alpha\, \dd x}
{\frac{(2^{2k+1})^{\alpha + 1}}{\alpha +1}} =
\frac{2^{\alpha + 1}(2^{\alpha + 1} -1)}{2^{2(\alpha + 1)} - 1}\\
& = \frac{2^{\alpha + 1}}{2^{\alpha + 1} + 1}
\end{aligned}
\end{gather*}
for all $\alpha > -1$. Hence $\lda\beta(A) < \lda\alpha(A) < \uda\alpha(A) < \uda\beta(A)$, if
$-1 < \alpha < \beta$.

 Now, taking any free ultrafilter
$\FF$ containing the set $\{2^{2k};\, k = 0,1, \dots\}$ we get:
\begin{gather*}
\mu_\alpha^\FF(A) = \flim \frac{A_\alpha(n)}{\N_\alpha(n)} = \lda\alpha(A).
\end{gather*}

This shows that the measure $\mu_\alpha^\FF$ cannot be of the form \refeq{lauwers eq},
if $\alpha > 0$.

Apart from providing a counterexample to \refeq{lauwers eq}, this
answers also one part of \cite[Question 7A.1]{vandouwen}. Van
Douwen asks, whether $\mu(A)\leq \ud(A)$ for every density
measure. The above procedure yields a measure $\mu_\alpha^\FF$
with $\mu_\alpha^\FF(A)=\uda\alpha(A)>\ud(A)$ (for $\alpha>0$ and
appropriate choice of the free filter $\FF$). A different example, based
on results of Bl\"umlinger \cite{blumlinger}, was presented in \cite{SLEZZIMDENSLEVY}.

Our previous observations lead to a more general class of density
measures than the one defined by Lauwers.

If a measure $\mu$ can be expressed in the form
\begin{gather}\label{new repr}
\mu(A) = \int_{\Omega} \mu_\alpha^\FF(A)\, \dd\psi(\FF, \alpha), \qquad A \subseteq \N
\end{gather}
for some probability Borel measure $\psi$ on the set $\Omega = \beta\N^* \times [-1, \infty)$,
then $\mu$ is a~density measure.

To be precise, we should check the existence of the integral in \refeq{new repr}.
As the function $f(\FF, \alpha) = \mu_\alpha^\FF(A) =
\flim \frac{A_\alpha(n)}{\N_\alpha(n)}$ is bounded, it suffices to show that
it is $\psi$-measurable for every Borel measure $\psi$. By Johnson \cite{johnson} a sufficient condition
for $f(\FF, \alpha)$ to be measurable is its separate continuity, i.e., continuity in $\FF$ for any
fixed $\alpha$ and continuity in $\alpha$ for any fixed $\FF$.

The continuity in $\FF$ follows immediately from the general theory of the
Stone-\v{C}ech compactification of a topological space (see e.g. \cite{gjrcf} or \cite{walker}).

For $\alpha > -1$, the continuity in $\alpha$ follows from the
estimations of Giuliano Antonini, Grekos and Mi\v{s}\'{\i}k
\cite{ggm}:
\begin{gather*}
\begin{aligned}
\limsup_{n \to \infty}\abs{\frac{A_\alpha(n)}{\N_\alpha(n)} -
\frac{A_{\alpha + \delta}(n)}{\N_{\alpha + \delta}(n)}} &< \frac{2 \delta}{\alpha + 1}\\
\limsup_{n \to \infty}\abs{\frac{A_\alpha(n)}{\N_\alpha(n)} -
\frac{A_{\alpha - \delta}(n)}{\N_{\alpha - \delta}(n)}} &< \frac{2 \delta}{\alpha - \delta + 1}
\end{aligned}
\end{gather*}
for $0 < \delta < \alpha + 1$.

It is proved in \cite{ggm} that there exists a set $A$ such that
the function $\lda{\alpha}(A)$ is discontinuous at $\alpha = -1$.
Thus our function $f(\FF, \alpha)$ cannot be continuous at $\alpha = -1$ for all filters $\FF\in\N^*$.
Hence we get the separate continuity on $\beta \N^* \times  (-1, \infty)$, only.

So $f$  is $\psi$-measurable on  $\beta \N^* \times  (-1, \infty)$ and on the measurable
set $\beta \N^* \times \{-1\}$ it is continuous, and thus Borel measurable. It follows
that $f$ is measurable on $\Omega$.

\section{Values of density measures}

Assume that $\mu$ is a density measure. Let $A \subseteq \N$. The
question is: {\em What are the possible values of $\mu(A)$.} Or:
{\em Which values can be attained by all density measures for a
fixed set $A$?} This question was proposed by Mark Fey in
\cite{fey2}.

It is clear that if $A \in \mcl D$, then $\mu(A) = d(A)$ for all
density measures $\mu$. But if it is not the case, there are more
possibilities for the value of $\mu(A)$. The next paragraphs
answer the above question.

The first estimation of $\mu(A)$ can be made using monotonicity of a measure.

If $B \subseteq A$ and $B \in \mcl D$, then $d(B) = \mu(B) \leq \mu(A)$. Hence
$\sup\{d(B);\, B\subseteq A,\, B\in \mcl{D}\} \leq \mu(A)$. Similarly,
$\inf\{d(C);\, C\supseteq A,\, C\in \mcl{D}\} \geq \mu(A)$. Let us denote
\begin{gather*}
\begin{aligned}
\lld(A) &= \sup\{d(B);\, B\subseteq A,\, B\in \mcl{D}\},\\
\uud(A) &= \inf\{d(C);\, C\supseteq A,\, C \in \mcl{D}\}.
\end{aligned}
\end{gather*}
Thus we get
\begin{thm}\label{first_estim}
For every set $A \subseteq \N$ and all density measures $\mu$ we have:
\begin{gather}
\lld(A) \leq \mu(A) \leq \uud(A).
\end{gather}
\end{thm}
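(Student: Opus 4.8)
The plan is to reduce everything to the monotonicity of $\mu$ together with the defining property \ref{dm2}. First I would establish that any density measure is monotone. Since $\mu$ takes values in $[0,1]$, we have $\mu(X) \geq 0$ for every $X \subseteq \N$; hence if $B \subseteq A$, writing $A = B \cup (A \sm B)$ as a disjoint union and applying \ref{dm1} gives $\mu(A) = \mu(B) + \mu(A \sm B) \geq \mu(B)$. This is the only structural fact about $\mu$ that the argument needs.

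Next I would observe that the two collections over which the supremum and infimum are taken are nonempty, so that $\lld(A)$ and $\uud(A)$ are well-defined real numbers in $[0,1]$: indeed $\emps \subseteq A$ with $\emps \in \mcl{D}$ and $d(\emps) = 0$, while $\N \supseteq A$ with $\N \in \mcl{D}$ and $d(\N) = 1$.

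The two inequalities then follow symmetrically. For the lower bound, fix any $B \subseteq A$ with $B \in \mcl{D}$; monotonicity gives $\mu(B) \leq \mu(A)$, and \ref{dm2} rewrites $\mu(B)$ as $d(B)$, so $d(B) \leq \mu(A)$. Since this holds for every admissible $B$, taking the supremum over all such $B$ yields $\lld(A) \leq \mu(A)$. For the upper bound, fix any $C \supseteq A$ with $C \in \mcl{D}$; monotonicity together with \ref{dm2} gives $\mu(A) \leq \mu(C) = d(C)$, and taking the infimum over all such $C$ yields $\mu(A) \leq \uud(A)$.

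I do not anticipate a genuine obstacle here: the statement is essentially a packaging of monotonicity, and the passage to the supremum and infimum involves no limiting subtlety, because each inequality $d(B) \leq \mu(A) \leq d(C)$ holds for the fixed value $\mu(A)$ before any extremum is taken. The only point requiring care is the very first step — deducing nonnegativity, and hence monotonicity, from the fact that $\mu$ is $[0,1]$-valued and finitely additive — after which both bounds are immediate.
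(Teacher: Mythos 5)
Your proof is correct and follows essentially the same route as the paper, which derives the two bounds directly from monotonicity of $\mu$ together with $\mu|_{\mcl{D}}=d$ and then passes to the supremum and infimum. The extra details you supply (deducing monotonicity from nonnegativity and finite additivity, and checking that the collections defining $\lld(A)$ and $\uud(A)$ are nonempty via $\emps$ and $\N$) are left implicit in the paper but are exactly the right justifications.
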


Later on we will show that this estimation is the best possible.

Let us take
$$
d_*(A) = \sup \frac{\sum_{i=1}^{p}d(A_i) - \sum_{j=1}^{q}d(B_j)}{k}.
$$
The supremum is taken over all finite collections
$A_1, A_2, \dots, A_p$, $B_1, B_2, \dots B_q$ of sets in $\mcl D$ and positive integers $k$ such that
$$
k \chi_A + \sum_{j=1}^{q} \chi_{B_j}  \geq  \sum_{i=1}^{p} \chi_{A_i}.
$$
Similarly,
$$
d^*(A) = \inf \frac{\sum_{i=1}^{p}d(A_i) - \sum_{j=1}^{q}d(B_j)}{k}.
$$
The infimum is taken over all finite collections
$A_1, A_2, \dots, A_p$, $B_1, B_2, \dots B_q \in \mcl D$ and positive integers $k$ such that
$$
k \chi_A + \sum_{j=1}^{q} \chi_{B_j}  \leq  \sum_{i=1}^{p} \chi_{A_i}
$$

It is clear that
\begin{gather}\label{lld<di}
\lld(A)  \leq d_*(A) \leq d^*(A) \leq \uud(A).
\end{gather}

By Bhashkara Rao \cite[Theorem 3.2.9]{brbr} for every set
$A \subseteq \N$ and any value $x \in [d_*(A), d^*(A)]$ there is a
density measure $\mu$ such that $\mu(A) = x$. Moreover, if $\mu$
is a~density measure, then $\mu(A) \in [d_*(A), d^*(A)]$.

The definition of $d_*$ and $d^*$ (the range of density measures) is rather complicated.
The original result in \cite{brbr} was formulated for more general situation
of extending arbitrary partial measures. (Roughly said, by a partial measure we mean a
restriction of a measure to some class of subsets of $\N$. For more details we refer
the reader to \cite[Section 3.2]{brbr}.)
In our case, we work only with the asymptotic density and our aim is to prove the simplification
of this result. This simplification is contained in the following theorem and its corollary.

\begin{thm}\label{lld=di}
For every $A \subseteq \N$ the following is true:
\begin{gather}
\lld(A) = d_*(A)\qquad \text{and} \qquad \uud(A) = d^*(A).
\end{gather}
\end{thm}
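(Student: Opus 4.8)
The plan is to establish the single inequality $d_*(A)\le\lld(A)$ for every $A\subseteq\N$; combined with \refeq{lld<di} this yields $\lld(A)=d_*(A)$, and the companion equality $\uud(A)=d^*(A)$ then comes for free by complementation. Indeed, substituting $\chi_A=\chi_\N-\chi_{\N\sm A}$ into the defining constraints and shifting $k$ copies of $\chi_\N$ across the inequality matches the admissible systems for $d^*(A)$ with those for $d_*(\N\sm A)$, giving $d^*(A)=1-d_*(\N\sm A)$; likewise $\uud(A)=1-\lld(\N\sm A)$ directly from the definitions of the sup and the inf. Hence the upper equality is just $\lld=d_*$ applied to $\N\sm A$, and I would only write out the lower one.

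So I would fix $A$ and an admissible system $A_1,\dots,A_p,B_1,\dots,B_q\in\mcl D$, $k\in\N$, with $\sum_i\chi_{A_i}\le k\chi_A+\sum_j\chi_{B_j}$, and set $g=\sum_i\chi_{A_i}-\sum_j\chi_{B_j}$ and $\delta=\tfrac1k\big(\sum_i d(A_i)-\sum_j d(B_j)\big)$. We may assume $\delta>0$. Since the supremum defining $d_*(A)$ runs over all such systems, it suffices to produce, for each $\sigma\in(0,\delta)$, a set $B\in\mcl D$ with $B\subseteq A$ and $d(B)=\sigma$: this forces $\lld(A)\ge\sigma$, and letting $\sigma\uparrow\delta$ and taking the supremum over systems gives $\lld(A)\ge d_*(A)$.

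The tempting move is to round the function $\tfrac1k\max(g,0)$, which satisfies $0\le\tfrac1k\max(g,0)\le\chi_A$ and has lower Ces\`aro average at least $\delta$. The main obstacle, and the crux of the whole theorem, is that this truncation need not possess a density: the level sets $\{g\ge t\}$ are Boolean combinations of the $A_i,B_j$ and generally escape $\mcl D$, so naive thresholding produces no member of $\mcl D$. This difficulty is genuine, not an artefact of the method, since $\lld(A)$ is typically strictly below $\ld(A)$ (as the example of Section~2 shows); one therefore cannot simply extract a dense subset from the support of $g$, and any correct argument must exploit more than the initial-segment behaviour of $g$.

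The device I would use to break the deadlock is that membership in $\mcl D$ controls each $A_i,B_j$ not merely on initial segments but uniformly across all blocks: for $E\in\mcl D$ one has $E(n)=d(E)n+o(n)$, and since $o(n)/n\to0$ a short argument upgrades this to $\sup_{0\le j\le n}\big|\,(E(n)-E(j))-d(E)(n-j)\,\big|=o(n)$. Summing over the finitely many sets of the system gives
\[\sum_{m=j+1}^{n} g(m)=k\delta\,(n-j)+o(n)\qquad\text{uniformly in }0\le j\le n.\]
Because $g\le k$ on $A$ and $g\le0$ off $A$, the set $S=\{m:\,g(m)>0\}$ lies in $A$ and inherits the uniform lower bound $|S\cap(j,n]|\ge\delta(n-j)-o(n)$. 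I would then select $B\subseteq S$ greedily at rate $\sigma$: put $m$ into $B$ exactly when $m\in S$ and the running count $B(m-1)$ is below $\sigma m$. The cap gives $B(n)\le\sigma n+1$, while the uniform spreading of $S$ (via a ``last time the count caught up'' estimate) bounds the deficit $\sigma n-B(n)$ by $o(n)$; hence $B(n)/n\to\sigma$, so $B\in\mcl D$, $B\subseteq A$, and $d(B)=\sigma$, as required. The only steps needing real care are the uniform block estimate and the matching deficit bound for the greedy selection; everything else is bookkeeping.
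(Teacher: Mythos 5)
Your proof is correct, but it follows a genuinely different route from the paper's. The paper first reduces the general system $k\chi_A+\sum_j\chi_{B_j}\ge\sum_i\chi_{A_i}$ to the case $p=q=1$ by an ``$m$-copy'' construction (spreading each set of the system into disjoint blocks of length $m$ so that the inequality of characteristic functions becomes a genuine inclusion $C\cup D\supseteq E$), and then invokes Lemma \ref{4:LMM7}, whose proof rests on a chain of lemmas: a greedy construction of a subset of prescribed density (Lemma \ref{4:LMM1}) and the Grekos--Volkmann closedness of the density set, which is needed in Lemma \ref{4:LMM2} to realize $\lld(A)$ as $d(B)$ for an actual $B\subseteq A$. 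You instead attack the general linear combination head-on: the upgrade of $E(n)=d(E)n+o(n)$ to the uniform block estimate $\sup_{0\le j\le n}\abs{(E(n)-E(j))-d(E)(n-j)}=o(n)$ is valid (it follows from $\sup_{j\le n}\abs{E(j)-d(E)j}=o(n)$), the positivity set $S$ of $g$ does lie in $A$ and inherits $\abs{S\cap(j,n]}\ge\delta(n-j)-o(n)$ uniformly, and your greedy selection with the ``last failure time'' $m(n)$ gives $B(n)\ge\sigma m(n)+\delta(n-m(n))-o(n)\ge\sigma n-o(n)$ together with $B(n)\le\sigma n+1$, so $B\in\mcl D$ with $d(B)=\sigma$; the complementation identities $d^*(A)=1-d_*(\N\sm A)$ and $\uud(A)=1-\lld(\N\sm A)$ also check out. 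Your greedy step is in spirit the same device as the paper's Lemma \ref{4:LMM1}, but your architecture avoids both the $m$-copy trick and the appeal to Grekos--Volkmann, making the proof of the theorem itself shorter and more self-contained; what the paper's longer route buys is the collection of intermediate structural facts about $\lld$ and $\uud$ (Lemmas \ref{4:LMM1}--\ref{4:LMM7} and the subsequent propositions), which are reused elsewhere in the article.
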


\begin{cor}\label{dmvalues}
Let $A \subseteq \N$. There exists a density measure $\mu$ such that
$\mu(A) = x$ if and only if $x \in [\lld(A), \uud(A)]$.
\end{cor}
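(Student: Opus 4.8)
The plan is to deduce the corollary from Theorem~\ref{lld=di} together with the characterization of Bhashkara Rao quoted above. By \cite[Theorem 3.2.9]{brbr} the set of numbers that occur as $\mu(A)$, when $\mu$ ranges over all density measures, is exactly the interval $[d_*(A),d^*(A)]$; and Theorem~\ref{lld=di} identifies this interval with $[\lld(A),\uud(A)]$. Hence every $x\in[\lld(A),\uud(A)]$ is attained by some density measure, and conversely no density measure takes a value outside this interval, which is precisely the corollary. The entire mathematical content is therefore carried by Theorem~\ref{lld=di}, so I also describe how I would prove that theorem.

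By \refeq{lld<di} we already have $\lld(A)\le d_*(A)\le d^*(A)\le\uud(A)$, so it suffices to prove the reverse inequalities $d_*(A)\le\lld(A)$ and $\uud(A)\le d^*(A)$; the arguments are dual and I treat the first. Fix an admissible system, that is, sets $A_1,\dots,A_p,B_1,\dots,B_q\in\mcl D$ and a positive integer $k$ with $k\chi_A+\sum_{j}\chi_{B_j}\ge\sum_{i}\chi_{A_i}$, and write $F=\sum_i\chi_{A_i}-\sum_j\chi_{B_j}$, $S(n)=\sum_{m=1}^{n}F(m)=\sum_i A_i(n)-\sum_j B_j(n)$, and $v=\frac1k\big(\sum_i d(A_i)-\sum_j d(B_j)\big)$, so that $S(n)/(kn)\to v$. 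It is enough to manufacture a single set $B\subseteq A$ with $B\in\mcl D$ and $d(B)=v$ (for $v>0$); then $\lld(A)\ge v$, and taking the supremum over all admissible systems gives $d_*(A)\le\lld(A)$.

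The decisive point is that the constraint $F\le k\chi_A$ forces $F(n)\le0$ for every $n\notin A$; equivalently $S$ increases only at points of $A$, is nonincreasing on the gaps of $A$, and satisfies $S(n)\le kA(n)$. I would therefore track the target $T(n)=S(n)/k$ --- and not the naive linear target $vn$ --- building $B$ greedily by putting $n\in B$ iff $n\in A$ and $B(n-1)<T(n)$. Choosing $T=S/k$ is what makes the construction succeed: $T$ inherits the gaps of $A$ (it never rises on a gap), exactly as a subset of $A$ must be frozen there, whereas no subset of $A$ could follow the target $vn$ when $A$ has long gaps. A short induction then gives $B(n)\ge T(n)$ for all $n$ (the inequality survives an added element because there $T$ rises by $F(n)/k\le1$ while $B$ rises by $1$, survives a declined element of $A$ because we decline only when $B(n-1)\ge T(n)$, and survives a gap because $T$ does not increase there), whence $\liminf_n B(n)/n\ge v$. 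In the other direction the rule never pushes $B$ more than one above its current target, so $B(n)\le\max_{m\le n}T(m)+1$; since $T(m)/m\to v$ one checks that the running maximum satisfies $\max_{m\le n}T(m)=vn+o(n)$, giving $\limsup_n B(n)/n\le v$. Hence $B\in\mcl D$, $B\subseteq A$ and $d(B)=v$ (and $\lld(A)\ge0\ge v$ trivially when $v\le0$), so $\lld(A)\ge d_*(A)$.

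The dual inequality $\uud(A)\le d^*(A)$ I would obtain from the case just treated applied to $A^c$, via the two elementary identities $\uud(A)=1-\lld(A^c)$ (pass to complements in the defining families) and $d^*(A)=1-d_*(A^c)$ (adjoining $k$ copies of $\N$ turns a $d^*(A)$-system into a $d_*(A^c)$-system of value $1-v$, and conversely); combining them with $\lld(A^c)=d_*(A^c)$ yields $\uud(A)=1-\lld(A^c)=1-d_*(A^c)=d^*(A)$. The main obstacle is conceptual rather than computational: the tempting shortcut $d_*(A)\le\ld(A)=\lld(A)$ is false, since $\lld$ can lie far below $\ld$ --- for $A=\bigcup_k\big(2^{2k},2^{2k+1}\big]$ one has $\lld(A)=0$ while $\ld(A)=\frac13$ --- so the witnessing subset must be built from the partial sums $S$ of the given system rather than from $A$ itself, and within that the one delicate estimate is that the running maximum of $S$ grows at the linear rate $kv$.
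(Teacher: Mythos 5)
Your deduction of the corollary itself coincides with the paper's: Corollary \ref{dmvalues} is stated there without a separate proof because it is immediate from Theorem \ref{lld=di} combined with the quoted theorem of Bhashkara Rao, and you invoke exactly those two facts. Where you genuinely diverge is in the proof of Theorem \ref{lld=di}. The paper first builds a chain of lemmas (Lemmas \ref{4:LMM1}--\ref{4:LMM7}), the key ingredients being a greedy two-set construction (Lemma \ref{4:LMM1}) and the Grekos--Volkmann closedness of the density set $S(A)$ (needed in Lemma \ref{4:LMM2} to realize $\lld(A)$ by an actual subset), and then reduces a general admissible system to the two-set case of Lemma \ref{4:LMM7} via the $m$-copy stretching trick, replacing $A$, the $A_i$ and the $B_j$ by unions of $m$-copies inside the blocks $\{mn+1,\dots,mn+m\}$. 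You instead run a single greedy construction directly against the normalized partial sums $T=S/k$ of the whole system. Your two inductions check out: $B(n)\ge T(n)$ holds because $T$ rises by $F(n)/k\le 1$ at points of $A$ and does not rise off $A$ (precisely where the constraint $F\le k\chi_A$ enters), and $B(n)\le \max_{m\le n}T(m)+1$ holds because an element is adjoined only when $B(n-1)<T(n)$; together with $T(n)/n\to v$ these give $B\subseteq A$, $B\in\mcl D$ and $d(B)=v$. The complementation identities $\uud(A)=1-\lld(\N\sm A)$ and $d^*(A)=1-d_*(\N\sm A)$ are also correct, and the paper uses the same duality for the second equality. So your argument is sound and buys something real: it is self-contained and elementary, bypassing both the Grekos--Volkmann theorem and the $m$-copy machinery; what it does not produce are the intermediate structural facts about $\lld$ and $\uud$ (Lemmas \ref{4:LMM2}--\ref{4:LMM7} and the subsequent propositions) that the paper reuses elsewhere.
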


Let us note that if a partial measure $m$ is defined on an algebra of sets then
by a result due to \L{o}\'s and Marczewski \cite[Proposition 3.3.1]{brbr}
$\underline{\underline m}=m_*$ and $\overline{\overline{m}}=m^*$ holds for
this measure. This result cannot be used here, since $\mcl{D}$ is not closed
under intersections and unions. (In fact, the smallest algebra containing
$\mcl{D}$ is the whole powerset $\mcl{P}(\N)$.)

As we show in Remark \ref{REMPOLYA}, Theorem \ref{lld=di} and Corollary \ref{dmvalues}
could be deduced from results of P\'olya \cite{polya} using some functional analytic considerations.
However, we still find our proof of interest, since it is relatively elementary
and it is an interesting application of known results on density sets obtained by Grekos and Volkmann \cite{grv}.

Before we prove Theorem \ref{lld=di} we will describe some basic properties of $\lld$ and $\uud$.
The following lemma is crucial for proving some of them.
Let us note that the proof was inspired by the proof of \cite[Lemma 1]{st}.

\begin{lem}\label{4:LMM1}
If $A,B\in\mcl D$, $d(A)<d(B)$, then there exists $D\in\mcl D$ such that
$A\cap B\subseteq D\subseteq B$ and $d(D)=d(A)$.
\end{lem}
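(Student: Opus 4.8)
The plan is to build the set $D$ by a greedy ``filling'' procedure that forces all of $A\cap B$ into $D$ and then adds elements of $C:=B\sm A$ only as needed to keep the running density of $D$ close to the target value $d(A)$. Writing $D(n)=\abs{D\cap[1,n]}$, I would define membership in $D$ inductively by the rules: every $n\in A\cap B$ lies in $D$; no $n\notin B$ lies in $D$; and for $n\in C$ we put $n\in D$ iff $D(n-1)<d(A)\cdot n$. By construction $A\cap B\subseteq D\subseteq B$ is immediate, so it remains only to prove $d(D)=d(A)$, which I would do by establishing $\ud(D)\le d(A)$ and $\ld(D)\ge d(A)$ separately.

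For the upper estimate, fix $n$ and let $m\le n$ be the largest index at which an element of $C$ was actually inserted (if there is none, then $D(n)=(A\cap B)(n)\le A(n)$ and we are done). No element of $C$ is inserted in $(m,n]$, so $D(n)-D(m)$ equals the number of forced elements of $A\cap B$ in that range, which is at most $A(n)-A(m)$. Since $m$ was inserted we have $D(m)<d(A)\,m+1$, and combining these with $A(n)-A(m)=d(A)(n-m)+o(n)$ (from $A(n)=d(A)n+o(n)$) yields $D(n)\le d(A)\,n+o(n)$, hence $\ud(D)\le d(A)$.

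For the lower estimate I would use a catch-up argument resting on $d(B)>d(A)$. For fixed large $n$ let $m\le n$ be the largest index with $D(m)\ge d(A)\,m$; such an $m$ exists for large $n$, since otherwise $D$ coincides with $B$ up to $n$ and $B(n)=d(B)n+o(n)>d(A)n$. On $(m,n]$ the set $D$ stays strictly below the target line, so every element of $B$ (forced, or in $C$ and hence inserted because we are below target) is added; thus $D(n)-D(m)=B(n)-B(m)=d(B)(n-m)+o(n)$. Using $D(m)\ge d(A)\,m$ and $d(B)>d(A)$ gives $D(n)\ge d(A)\,n+(d(B)-d(A))(n-m)+o(n)\ge d(A)\,n+o(n)$, whence $\ld(D)\ge d(A)$.

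I expect the main obstacle to be making the two $o(n)$ estimates uniform and handling the boundary cases cleanly -- in particular verifying that the ``last relevant index'' $m$ tends to infinity with $n$ (or else is harmlessly bounded), so that the error terms coming from $A(m)=d(A)m+o(m)$ and $B(m)=d(B)m+o(m)$ are genuinely $o(n)$. Once these bookkeeping points are settled, combining the two one-sided bounds yields $d(D)=d(A)$ with $D\in\mcl D$, completing the proof.
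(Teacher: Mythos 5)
Your construction is correct and is in the same spirit as the paper's: both proofs build $D$ greedily inside $B$, forcing in all of $A\cap B$ and admitting further elements of $B$ only when a running threshold permits, and both then analyse the last index at which the threshold was active, splitting into the cases where that index is bounded or tends to infinity. The one genuine difference is the threshold itself: you compare $D(n-1)$ against the \emph{line} $d(A)\cdot n$, whereas the paper compares the partial counts of $D':=D\sm(A\cap B)$ against the \emph{counting function} $A'(n)$ of $A\sm B$. Your choice makes the two one-sided density estimates slightly cleaner (the target is literally the density you want), and your handling of the error terms and of the existence/behaviour of the last special index $m(n)$ is sound. What the paper's choice buys is a stronger conclusion: it shows $\lim_{n\to\infty}\frac{D(n)-A(n)}{n}=0$, not merely $d(D)=d(A)$, and it only uses the existence of $\lim_{n\to\infty}\frac{B(n)-A(n)}{n}$ rather than of $d(A)$ and $d(B)$ separately. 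That by-product is recorded as Lemma \ref{4:LMM1B} and is reused later (e.g.\ in Lemma \ref{4:LM2B} and the proposition on $\mu(B)-\mu(A)$); your line-based threshold needs $d(A)$ to exist and tracks only the density, so it proves the stated lemma but would not yield that refinement.
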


\begin{proof}
Put $C = A\cap B$, $A' = A\smallsetminus C$, $B' = B\smallsetminus C$.
We have $\lim\limits_{n \to \infty} \frac{C(n)+B'(n)}n=d(B)$
and $\lim\limits_{n \to \infty} \frac{C(n)+A'(n)}n=d(A)$, hence
\begin{gather*}
L = \lim\limits_{n \to \infty} \frac{B'(n)-A'(n)}n=d(B)-d(A)>0.
\end{gather*}
We shall construct a subset $D'\subset B'$ such that $\lim\limits_{n \to \infty}
\frac{D'(n)-A'(n)}n = 0$. Then for $D = C\cup D'$ we have
$d(D) = \lim\limits_{n \to \infty} \frac{C(n)+D'(n)}n =
\lim\limits_{n \to \infty} \frac{C(n)+A'(n)}n +
\lim\limits_{n \to \infty} \frac{D'(n)-A'(n)}n=d(A)$, so $D$ is the desired subset of $B$.

The subset $D'$ is defined by induction. If $n\notin B'$, then $n\notin
D'$. If $n\in B'$ and $D'(n-1)+1>A'(n)$, then $n\notin D'$. If $n\in B'$
and $D'(n-1)+1 \leq A'(n)$, then $n\in D'$. It is obvious that $D'(n) \leq A'(n)$.

Let us note that if $m\in B'$ but $m\notin D'$ (the second case), then
$D'(m)+1>A'(m)\geq D'(m)$, hence $D'(m)=A'(m)$. If $n\in\N$ and $m$ is the
largest number such that $m\leq n$, $m\notin D'$ and $m\in B'$, then for
every $k$, $m < k \leq n$, we have $D'(k)-D'(m)=B'(k)-B'(m)$ (since all
members of $B'$ in the interval $(m,n]$ belong to $D'$). This implies
$B'(k)-B'(m) \leq A'(k)-A'(m)$.

We denote the largest number $m\leq n$ for which the second case occurs by
$m(n) = m$. The set $\{m(n);\, n\in\N\}$ of all such numbers is unbounded.
Otherwise, assume that $m$ is the maximal element of this set. Then we get
$d(B) = \lim\limits_{n \to \infty} \frac{C(n)+B'(n)}n \leq \lim\limits_{n \to \infty}
\frac{C(n)+A'(n)+B'(m)-A'(m)}n = d(A)$, a contradiction.

Now let $\eps>0$ and $N_0$ be such that for $k\geq N_0$ the inequality
$\abs{\frac{B'(n)-A'(n)}n-L}\leq\eps$ holds. Since the set $\{m(n);\, n\in\N\}$
is unbounded, we can choose $n$ large enough to assure that
$n\geq m(n)\geq N_0$. Then we get
\begin{gather*}
\begin{aligned}
(L+\eps)m(n) &\geq B'(m(n)) - A'(m(n)),\\
(L-\eps)n &\leq B'(n) - A'(n).
\end{aligned}
\end{gather*}
Hence $n(L-\eps) \leq B'(n) - A'(n) \leq B'(m(n)) - A'(m(n)) \leq (L+\eps)m(n)$
and
\begin{gather*}
m(n) \geq n\frac{L-\eps}{L+\eps},\\
n-m(n) \leq \frac{2\eps n}{L+\eps} \leq \frac{2\eps}L n.
\end{gather*}
We have $A'(n)-D'(n)\leq A'(n)-D'(m(n)) = A'(n)-A'(m(n)) \leq n-m(n)$,
$$0\leq \frac{A'(n)-D'(n)}n \leq \frac{2\eps}L$$
and
$$\lim\limits_{n \to \infty} \frac{A'(n)-D'(n)}n =0.$$
\end{proof}

Of course the claim of this lemma holds also if $d(A)=d(B)$. We proved in
fact also the following result:

\begin{lem}\label{4:LMM1B}
If $A\cap B=\emptyset$, $\lim\limits_{n \to \infty} \frac{B(n)-A(n)}n$
exists and $\lim\limits_{n \to \infty} \frac{B(n)-A(n)}n
>0$, then there is a subset $D\subseteq B$ with $\lim\limits_{n \to
\infty} \frac{D(n)-A(n)}n=0$. In particular, $B\smallsetminus D\in\mcl D$.
\end{lem}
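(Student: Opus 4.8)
The plan is to re-run the inductive construction from the proof of Lemma~\ref{4:LMM1}, but now applied directly to the disjoint sets $A$ and $B$ (which play the roles of $A'$ and $B'$ there), and to replace the one step in that argument that relied on $A,B\in\mcl D$ by a purely limit-based computation. Set $L=\limti n \frac{B(n)-A(n)}n>0$, and define $D\subseteq B$ by induction exactly as before: put $n\notin D$ whenever $n\notin B$; if $n\in B$ and $D(n-1)+1>A(n)$ then $n\notin D$; and if $n\in B$ and $D(n-1)+1\le A(n)$ then $n\in D$. As in the earlier proof this guarantees $D(n)\le A(n)$ for all $n$, and whenever $m\in B\sm D$ (the ``second case'') we have $D(m)=A(m)$.

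First I would record the monotonicity estimate that drives everything. Writing $m(n)$ for the largest $m\le n$ lying in $B\sm D$, I would observe that every element of $B$ in the interval $(m(n),n]$ belongs to $D$, so $D(n)-D(m(n))=B(n)-B(m(n))$; combining this with $D(m(n))=A(m(n))$ and $D(n)\le A(n)$ yields $B(n)-A(n)\le B(m(n))-A(m(n))$.

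The main obstacle — the only genuine departure from the previous proof — is showing that the set $\{m(n);\,n\in\N\}$ is unbounded \emph{without} appealing to $A,B\in\mcl D$. The plan here is a direct contradiction: if this set had a largest element $m$, then $m(n)=m$ for all large $n$, and the monotonicity estimate would give $B(n)-A(n)\le B(m)-A(m)$ for every $n>m$; dividing by $n$ and letting $n\to\infty$ would force $L=\limti n \frac{B(n)-A(n)}n\le 0$, contradicting $L>0$.

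With unboundedness in hand, the remainder is identical to the earlier argument. Given $\eps>0$, choose $N_0$ so that $\abs{\frac{B(n)-A(n)}n-L}\le\eps$ for $n\ge N_0$, and pick $n$ large enough that $n\ge m(n)\ge N_0$. The two one-sided estimates together with the monotonicity relation give $n(L-\eps)\le(L+\eps)m(n)$, whence $n-m(n)\le\frac{2\eps}{L}n$. Since $A(n)-D(n)\le A(n)-A(m(n))\le n-m(n)$, this bounds $\frac{A(n)-D(n)}n$ by $\frac{2\eps}L$ for all large $n$, giving $\limti n \frac{D(n)-A(n)}n=0$. Finally, because $D\subseteq B$ we have $(B\sm D)(n)=B(n)-D(n)=\big(B(n)-A(n)\big)-\big(D(n)-A(n)\big)$, so $\frac{(B\sm D)(n)}n\to L$, which shows $B\sm D\in\mcl D$ (with $d(B\sm D)=L$).
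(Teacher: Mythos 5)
Your proof is correct and follows essentially the same route as the paper, which derives Lemma \ref{4:LMM1B} by observing that the inductive construction in the proof of Lemma \ref{4:LMM1} (applied to the disjoint parts $A'$, $B'$) only ever used the existence and positivity of $\lim_{n\to\infty}\frac{B(n)-A(n)}{n}$. Your explicit reworking of the unboundedness of $\{m(n)\}$ directly from $L>0$, rather than via $d(A)<d(B)$, is exactly the point the paper leaves implicit, and the rest of your argument matches the paper's step for step.
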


\begin{cor}\label{4:COR1}
If $A,B\in\mcl D$, $d(A)<d(B)$, then there exists $D\in\mcl D$ such that
$A\subseteq D\subseteq A\cup B$, $D\in\mcl D$ and $d(D)=d(B)$.
\end{cor}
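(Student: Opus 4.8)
The plan is to obtain this corollary as the complementary (dual) form of Lemma \ref{4:LMM1}, so that no new construction is needed. First I would pass to complements in $\N$: set $A' = \N \sm A$ and $B' = \N \sm B$. Since $\mcl D$ is closed under complementation (if $d(X)$ exists, then $d(\N\sm X)=1-d(X)$ exists as well), we have $A',B'\in\mcl D$, and the hypothesis $d(A)<d(B)$ translates into $d(B')=1-d(B)<1-d(A)=d(A')$. Thus the pair $(B',A')$ satisfies exactly the hypotheses of Lemma \ref{4:LMM1}, with $B'$ playing the role of the set of smaller density.

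Applying Lemma \ref{4:LMM1} to $(B',A')$ would then produce a set $D'\in\mcl D$ with
\[
B'\cap A'\subseteq D'\subseteq A', \qquad d(D')=d(B')=1-d(B).
\]
The candidate set I would take is $D=\N\sm D'$. Its density is immediate, $d(D)=1-d(D')=d(B)$, and $D\in\mcl D$ since $D'\in\mcl D$.

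The remaining step is to verify that complementation converts the inclusions $B'\cap A'\subseteq D'\subseteq A'$ into the required $A\subseteq D\subseteq A\cup B$. Taking complements reverses the inclusions, giving $\N\sm A'\subseteq\N\sm D'\subseteq\N\sm(B'\cap A')$. Here $\N\sm A'=A$, and by De Morgan's law $\N\sm(B'\cap A')=B\cup A$, so indeed $A\subseteq D\subseteq A\cup B$. This bookkeeping with complements is the only genuine content of the argument; the main (and quite mild) point to be careful about is keeping track of which set plays the role of the lower-density set when invoking Lemma \ref{4:LMM1}. Everything else is a direct reuse of that lemma, which is why the corollary is elementary once the lemma is in hand.
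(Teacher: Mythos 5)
Your proof is correct and is essentially identical to the paper's: both pass to complements, apply Lemma \ref{4:LMM1} to the pair $(\N\sm B,\N\sm A)$, and take the complement of the resulting set (your $D'$ is the paper's $E$). The bookkeeping with De Morgan's law is carried out correctly.
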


\begin{proof}
We have $d(\N\smallsetminus A)>d(\N\smallsetminus B)$. By Lemma
\ref{4:LMM1} there exists a set $E\in\mcl D$ such that
$\N\smallsetminus(A\cup B)\subseteq E \subseteq \N\smallsetminus A$
and $d(E)=d(\N\smallsetminus B)$. If we put $D = \N\smallsetminus
E$, then $A\subseteq D \subseteq A\cup B$ and $d(D)=d(B)$.
\end{proof}

\begin{lem}\label{4:LMM2}
Let $A\subseteq\N$. Then there exists a subset $B\subseteq A$ such that
$B\in\mcl D$ and $d(B)=\lld(A)$.

Similarly, there exists a superset $C\supseteq B$ such that $C\in\mcl D$ and $d(C)=\uud(A)$.
\end{lem}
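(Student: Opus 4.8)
The plan is to prove the first assertion — that the supremum defining $\lld(A)$ is attained by a single subset of $A$ — and to deduce the second by complementation. Write $s=\lld(A)$. If $s=0$ the empty set already works, so I assume $s>0$, and I may also assume the supremum is not realized at a finite stage (otherwise there is nothing to do). Hence I can pick sets $B_k'\subseteq A$ with $B_k'\in\mcl D$ and $d(B_k')=s_k$ strictly increasing to $s$. The first step is to replace these by a \emph{nested} chain. Starting from $\tilde B_1=B_1'$ and applying Corollary \ref{4:COR1} to $\tilde B_k$ and $B_{k+1}'$ (whose densities satisfy $s_k<s_{k+1}$), I obtain $\tilde B_{k+1}$ with $\tilde B_k\subseteq\tilde B_{k+1}\subseteq\tilde B_k\cup B_{k+1}'\subseteq A$, $\tilde B_{k+1}\in\mcl D$ and $d(\tilde B_{k+1})=s_{k+1}$. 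This yields an increasing chain $\tilde B_1\subseteq\tilde B_2\subseteq\cdots\subseteq A$ with $d(\tilde B_k)=s_k\nearrow s$.

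The heart of the argument is a diagonalization. I would choose a null sequence $\eps_k\downarrow 0$ and then a rapidly increasing sequence $0=n_0<n_1<n_2<\cdots$ such that, for every $k$, (a) $\abs{\tilde B_k(m)/m-s_k}\le\eps_k$ for all $m\ge n_k$ (possible since $d(\tilde B_k)=s_k$), and (b) $n_{k-1}\le\eps_{k-1}n_k$. Then I define $B$ to agree with $\tilde B_k$ on each block $(n_k,n_{k+1}]$, i.e.\ $B=\bigcup_{k\ge 1}\big(\tilde B_k\cap(n_k,n_{k+1}]\big)$. Since every block is contained in some $\tilde B_k\subseteq A$, we have $B\subseteq A$, and it only remains to check that $d(B)=s$.

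The two density estimates are where the conditions (a) and (b) are spent. For the upper bound I would exploit nestedness: because $\tilde B_j\subseteq\tilde B_k$ for $j\le k$, the part of $B$ lying in $[1,n_k]$ is contained in $\tilde B_k$, so $B(n_k)\le\tilde B_k(n_k)$; consequently, for $n\in(n_k,n_{k+1}]$, $B(n)=B(n_k)+\tilde B_k(n)-\tilde B_k(n_k)\le\tilde B_k(n)\le(s+\eps_k)n$ by (a), whence $\ud(B)\le s$. For the lower bound I would first note, using (a) at $m=n_k$ together with (b), the direct estimate $B(n_k)\ge\tilde B_{k-1}(n_k)-\tilde B_{k-1}(n_{k-1})\ge(s_{k-1}-\eps_{k-1})n_k-n_{k-1}\ge(s_{k-1}-2\eps_{k-1})n_k$. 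Feeding this into $B(n)=B(n_k)+\tilde B_k(n)-\tilde B_k(n_k)$ for $n\in(n_k,n_{k+1}]$ and bounding $\tilde B_k(n)\ge(s_k-\eps_k)n$ and $\tilde B_k(n_k)\le(s_k+\eps_k)n_k\le(s_k+\eps_k)n$, a short computation gives $B(n)/n\ge s_k-\eps_k-\eta_k$ with $\eta_k=(s_k-s_{k-1})+2\eps_{k-1}+\eps_k\to 0$. Hence $\ld(B)\ge s$, and together with the upper bound this shows $B\in\mcl D$ with $d(B)=s=\lld(A)$.

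Finally, the second assertion follows by applying the first part to $\N\sm A$: it yields $B\subseteq\N\sm A$ with $B\in\mcl D$ and $d(B)=\lld(\N\sm A)$, so that $C=\N\sm B\supseteq A$ lies in $\mcl D$ with $d(C)=1-\lld(\N\sm A)=\uud(A)$, the last equality being the elementary duality $\lld(\N\sm A)=1-\uud(A)$ obtained by complementing the sets in the definitions of $\lld$ and $\uud$. The main obstacle I anticipate is the lower-density bookkeeping, specifically controlling $B(n)/n$ for $n$ just above a block boundary $n_k$ (which is exactly what forces condition (b) and the estimate for $B(n_k)/n_k$); by contrast, the nestedness of the $\tilde B_k$ makes the upper estimate essentially immediate.
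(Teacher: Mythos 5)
Your proof is correct, but it follows a genuinely different route from the paper. The paper disposes of the first assertion in two lines by invoking the Grekos--Volkmann result that the density set $S(A)=\{(\ld(B),\ud(B));\,B\subseteq A\}$ is closed: the points $(s_k,s_k)$ accumulate at $(\lld(A),\lld(A))$, so closedness immediately supplies a subset $B\subseteq A$ with $d(B)=\lld(A)$, and the second assertion is handled by the same duality you use. You instead build the extremal set by hand: Corollary \ref{4:COR1} turns an optimizing sequence into a nested chain $\tilde B_1\subseteq\tilde B_2\subseteq\cdots\subseteq A$ with $d(\tilde B_k)=s_k\nearrow s$, and the block construction with conditions (a) and (b) is a standard diagonalization whose bookkeeping you carry out correctly --- the upper estimate from nestedness, the lower estimate $B(n_k)\ge(s_{k-1}-2\eps_{k-1})n_k$ from (b), and the resulting bound $B(n)/n\ge s_{k-1}-2\eps_{k-1}-2\eps_k$ on the $k$-th block all check out (the first block and the case $s=0$ are trivial, as you note). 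What your approach buys is self-containedness and elementarity: apart from Corollary \ref{4:COR1}, which the paper proves from Lemma \ref{4:LMM1}, you need no external input, whereas the paper's proof is shorter but rests on the nontrivial closedness theorem from \cite{grv}. One cosmetic point: the statement's ``superset $C\supseteq B$'' is evidently a misprint for $C\supseteq A$, and your reading (and complementation argument) is the intended one.
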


\begin{proof}
By the definition of $\lld(A)$ we have $\lld(A) = \sup\{d(B);\, B\subseteq
A,\, B\in \mcl{D}\}$. By results of Grekos and Volkmann \cite{grv} the density set $S(A)$ of all
density points $(\ld B,\ud B)$, $B\subseteq A$, is closed, hence it
contains its accumulation point $\big(\lld (A),\lld (A)\big)$. This point corresponds
to the desired subset $B$ of $A$.

The proof of the second part is analogous.
\end{proof}

\begin{lem}\label{4:LMM5}
If $A\cap B=\emptyset$, $A\in\mcl D$, $\lld(B)=0$, then $\lld(A\cup
B)=d(A)$.
\end{lem}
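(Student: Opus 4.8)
The plan is to prove the two inequalities $\lld(A\cup B)\ge d(A)$ and $\lld(A\cup B)\le d(A)$ separately. The lower bound is immediate: since $A\subseteq A\cup B$ and $A\in\mcl D$, the set $A$ itself is admissible in the supremum defining $\lld(A\cup B)$, so $\lld(A\cup B)\ge d(A)$.

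For the upper bound I would take an arbitrary $E\subseteq A\cup B$ with $E\in\mcl D$ and show $d(E)\le d(A)$; taking the supremum over all such $E$ then yields $\lld(A\cup B)\le d(A)$. Because $A\cap B=\emps$ and $E\subseteq A\cup B$, I can write $E$ as the disjoint union $E=E_A\cup E_B$ with $E_A=E\cap A\subseteq A$ and $E_B=E\sm A=E\cap B\subseteq B$, so that $E(n)=E_A(n)+E_B(n)$. From $E_A\subseteq A\in\mcl D$ we get $\ud(E_A)\le d(A)$. Now I would choose a subsequence $(n_k)$ along which $\frac{E_B(n_k)}{n_k}\to\ld(E_B)$; since $\frac{E(n)}{n}$ converges to $d(E)$, the quotient $\frac{E_A(n_k)}{n_k}=\frac{E(n_k)}{n_k}-\frac{E_B(n_k)}{n_k}$ tends to $d(E)-\ld(E_B)$, and this limit is bounded above by $\ud(E_A)\le d(A)$. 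Hence $d(E)\le d(A)+\ld(E_B)$.

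It then remains to show $\ld(E_B)=0$, and since $E_B\subseteq B$ it suffices to prove $\ld(B)=0$. This is where the hypothesis $\lld(B)=0$ enters, and it is the main obstacle. The easy inequality $\lld(B)\le\ld(B)$ is not what is needed (every $F\subseteq B$ with $F\in\mcl D$ satisfies $d(F)=\ld(F)\le\ld(B)$, giving only $\ld(B)\ge 0$); the inequality I actually need is $\ld(B)\le\lld(B)$, which asserts that a set of positive lower density always contains a subset in $\mcl D$ of positive density. I would establish the identity $\lld=\ld$ exactly as in the proof of Lemma \ref{4:LMM2}, via the Grekos--Volkmann theorem: the density set $S(B)$ is closed and its diagonal reaches up to $\ld(B)$, so $\big(\ld(B),\ld(B)\big)\in S(B)$ furnishes a subset $F\subseteq B$ with $F\in\mcl D$ and $d(F)=\ld(B)$, whence $\lld(B)\ge\ld(B)$. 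Applying this to $B$ together with $\lld(B)=0$ gives $\ld(B)=0$, so $\ld(E_B)=0$ and therefore $d(E)\le d(A)$, which finishes the argument.

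As a more elementary alternative to the Grekos--Volkmann step, $\ld(B)=0$ can be obtained by a direct greedy construction in the spirit of Lemma \ref{4:LMM1}: if $\ld(B)=\delta>0$, then $B(n)>\tfrac{\delta}{2}n$ for all large $n$, and selecting elements of $B$ so as to keep the counting function as close as possible to $\tfrac{\delta}{2}n$ from below produces a set $F\subseteq B$ with $F\in\mcl D$ and $d(F)=\tfrac{\delta}{2}>0$, contradicting $\lld(B)=0$. Either way, the crux of the proof is this passage from $\lld(B)=0$ to $\ld(B)=0$; once it is in place, the rest reduces to the elementary subsequence estimate above.
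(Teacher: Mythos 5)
The first half of your argument (the lower bound, the decomposition $E=E_A\cup E_B$, and the estimate $d(E)\le d(A)+\ld(E_B)$ via a subsequence realizing $\ld(E_B)$) is fine, but the step you yourself identify as the crux — passing from $\lld(B)=0$ to $\ld(B)=0$, i.e.\ the inequality $\ld(B)\le\lld(B)$ — is false, and both of your proposed justifications for it fail. The paper's own running example refutes it: for $B=\bigcup_{k\ge 0}\big(2^{2k},2^{2k+1}\big]\cap\N$ one has $\ld(B)=\tfrac13$, yet $\lld(B)=0$, because $\lambda(B)=2>1$ forces $\ud(D)\ge 2\,\ld(D)$ for every $D\subseteq B$, so no subset of $B$ lying in $\mcl D$ can have positive density (one can also see this directly: $D\subseteq B$ with $D\in\mcl D$ satisfies $D(2^{2k+2})=D(2^{2k+1})$, forcing $d(D)=0$). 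The Grekos--Volkmann closedness argument does not give $(\ld(B),\ld(B))\in S(B)$: closedness places $(\lld(B),\lld(B))$ in $S(B)$ because diagonal points $(d(F),d(F))$ accumulate there \emph{by definition} of $\lld$ as a supremum over $F\in\mcl D$, whereas there is no reason for diagonal points of $S(B)$ to reach up to $\ld(B)$; in general the diagonal part of $S(B)$ stops at $\lld(B)<\ld(B)$. Your greedy alternative fails for the same reason: in the long gaps of $B$ the counting function of any subset stalls while $\tfrac{\delta}{2}n$ keeps growing, so the construction does not produce a set in $\mcl D$. (The true easy inequality is the opposite one, $\lld\le\ld$, and it can be strict.)

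Moreover, the gap cannot be closed just by repairing that step, because $\ld(E_B)$ genuinely need not vanish: with $A$ the even numbers and $B$ the odd elements of the set above, one can build $E\in\mcl D$, $E\subseteq A\cup B$, $d(E)=\tfrac12=d(A)$, with $E\cap B=B$ and hence $\ld(E_B)=\tfrac16>0$; your bound $d(E)\le d(A)+\ld(E_B)$ is then strictly weaker than the conclusion. The information you lose is that the subsequence realizing $\ld(E_B)$ need not realize $\ud(E_A)$. The paper avoids this by working with sets rather than counting functions: assuming $\lld(A\cup B)>d(A)$, it takes $C\subseteq A\cup B$ in $\mcl D$ with $d(C)>d(A)$ and invokes Corollary \ref{4:COR1} (built on the constructive Lemma \ref{4:LMM1}) to replace $C$ by $D\in\mcl D$ with $A\subseteq D\subseteq A\cup B$ and $d(D)=d(C)$; then $D\sm A\subseteq B$ is an honest member of $\mcl D$ of positive density $d(C)-d(A)$, contradicting $\lld(B)=0$. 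Some such enlargement step, making the relevant subset of $B$ itself an element of $\mcl D$, is exactly what your proposal is missing.
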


\begin{proof}
Assume that $\lld(A \cup B)>d(A)$. Then there is $C\subseteq A\cup
B$ with $d(C)>d(A)$. By Corollary \ref{4:COR1} we may assume that
$C\supseteq A$. Then $C\smallsetminus A\in\mcl D$,
$d(C\smallsetminus A)= d(C)-d(A)>0$ and therefore $\lld(B) >0$, a
contradiction.
\end{proof}

\begin{lem}\label{4:LMM6}
If $A\in\mcl D$, $A\cap B=\emptyset$, then $\lld(A\cup B)=d(A)+\lld (B)$.
\end{lem}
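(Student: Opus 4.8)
The plan is to avoid any direct manipulation of subsets of $A\cup B$ and instead reduce the whole statement to the already-established special case, Lemma \ref{4:LMM5}, by first extracting an optimal measurable piece of $B$. Set $s=\lld(B)$. By Lemma \ref{4:LMM2} there exists $B_0\subseteq B$ with $B_0\in\mcl D$ and $d(B_0)=\lld(B)=s$. The role of $B_0$ is to absorb all of the ``measurable mass'' of $B$, leaving behind a remainder that Lemma \ref{4:LMM5} can handle.

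The key step is the observation that $\lld(B\sm B_0)=0$. This follows from the optimality of $B_0$: if some $E\subseteq B\sm B_0$ with $E\in\mcl D$ had $d(E)>0$, then $B_0\cup E\subseteq B$ would be a disjoint union of sets in $\mcl D$, hence $B_0\cup E\in\mcl D$ with $d(B_0\cup E)=s+d(E)>s$, contradicting the fact that $\lld(B)=s$ is the supremum of $d$ over measurable subsets of $B$. Thus every $E\subseteq B\sm B_0$ in $\mcl D$ has $d(E)=0$, giving $\lld(B\sm B_0)=0$.

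To finish, I would decompose $A\cup B=(A\cup B_0)\cup(B\sm B_0)$, which is valid because $B_0\subseteq B$. Here $A\cup B_0\in\mcl D$ is a disjoint union (as $A\cap B=\emps$ forces $A\cap B_0=\emps$), so $d(A\cup B_0)=d(A)+s=d(A)+\lld(B)$; moreover $(A\cup B_0)\cap(B\sm B_0)=\emps$ since $A\cap(B\sm B_0)=\emps$ and $B_0\cap(B\sm B_0)=\emps$. Applying Lemma \ref{4:LMM5} with $A\cup B_0$ in place of $A$ and $B\sm B_0$ in place of $B$ (whose hypotheses are exactly the three facts just verified, together with $\lld(B\sm B_0)=0$) yields $\lld(A\cup B)=d(A\cup B_0)=d(A)+\lld(B)$, which is the desired equality in one stroke.

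I expect no serious obstacle here; the only points requiring care are the verification that $\lld(B\sm B_0)=0$ and the bookkeeping of the disjointness conditions needed to invoke Lemma \ref{4:LMM5}. The mild subtlety worth flagging is that one should not attempt to split an arbitrary measurable $C\subseteq A\cup B$ into $C\cap A$ and $C\cap B$, since neither piece need lie in $\mcl D$; routing the argument through Lemma \ref{4:LMM5} is precisely what sidesteps this difficulty.
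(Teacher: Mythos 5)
Your proof is correct and is essentially identical to the paper's: both extract an optimal $B_0\subseteq B$ in $\mcl D$ with $d(B_0)=\lld(B)$ via Lemma \ref{4:LMM2}, note that $\lld(B\sm B_0)=0$, and apply Lemma \ref{4:LMM5} to the decomposition $A\cup B=(A\cup B_0)\cup(B\sm B_0)$. The only difference is that you spell out the (correct) justification that $\lld(B\sm B_0)=0$, which the paper dismisses with ``clearly.''
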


\begin{proof}
By Lemma \ref{4:LMM2} there exists $B_1\subseteq B$ such that
$d(B_1)=\lld B$. Clearly, $\lld(B\smallsetminus B_1)=0$. Then
using Lemma \ref{4:LMM5} we get $\lld(A\cup B)=\lld (A\cup
B_1\cup(B\smallsetminus B_1))= d(A\cup
B_1)=d(A)+d(B_1)=d(A)+\lld(B)$.
\end{proof}

\begin{lem}\label{4:LMM7}
If $B,C\in\mcl D$ and $A\cup B \supseteq C$, then $d(C)-d(B) \leq \lld(A)$.
\end{lem}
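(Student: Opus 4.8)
The plan is to reduce the statement to the additivity property of Lemma \ref{4:LMM6} together with two elementary facts about $\lld$ that follow directly from its definition as a supremum of densities of subsets. First I would record that $\lld$ is monotone, i.e. $E\subseteq E'$ implies $\lld(E)\le\lld(E')$, since every $D\subseteq E$ with $D\in\mcl D$ is also a subset of $E'$; and that $\lld(E)=d(E)$ whenever $E\in\mcl D$, because $E$ itself realizes the supremum while any $D\subseteq E$ in $\mcl D$ satisfies $d(D)\le d(E)$ by monotonicity of $d$.

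The key observation is that the hypothesis $A\cup B\supseteq C$ is exactly the statement $C\sm B\subseteq A$. Hence the set $C\sm B$ lies inside $A$, and by monotonicity of $\lld$ it suffices to prove $\lld(C\sm B)\ge d(C)-d(B)$. To estimate this quantity I would reintroduce $B$: the set $C\sm B$ is disjoint from $B$ and $B\in\mcl D$, so Lemma \ref{4:LMM6} applies with roles $A\mapsto B$ and $B\mapsto C\sm B$, yielding
\[
\lld(B\cup C)=\lld\bigl(B\cup(C\sm B)\bigr)=d(B)+\lld(C\sm B).
\]
On the other hand $C\subseteq B\cup C$, so monotonicity together with $\lld(C)=d(C)$ gives $\lld(B\cup C)\ge d(C)$. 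Combining the last two facts produces $\lld(C\sm B)\ge d(C)-d(B)$, and a final application of monotonicity, using $C\sm B\subseteq A$, completes the argument: $\lld(A)\ge\lld(C\sm B)\ge d(C)-d(B)$.

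There is no genuine obstacle here; the whole content is the decomposition $B\cup C=B\sqcup(C\sm B)$, which converts the containment hypothesis into a situation governed by the additivity Lemma \ref{4:LMM6}. The only points needing momentary care are to check that $C\sm B$ is truly disjoint from $B$ so that Lemma \ref{4:LMM6} is applicable, and that the two auxiliary properties of $\lld$ (monotonicity, and agreement with $d$ on $\mcl D$) are invoked at the right places.
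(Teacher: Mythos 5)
Your proof is correct and follows essentially the same route as the paper's: both arguments rest on the additivity Lemma \ref{4:LMM6} applied to $B$ and a set disjoint from it, combined with monotonicity of $\lld$. The only cosmetic difference is that you decompose $B\cup C=B\sqcup(C\sm B)$ and pass through $\lld(C\sm B)$, whereas the paper decomposes $A\cup B=B\sqcup(A\sm B)$ and bounds $d(C)\le\lld(A\cup B)$ directly.
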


\begin{proof}
$d(C) \leq \lld(A\cup B)= \lld((A\smallsetminus B) \cup B) = \lld
(A\smallsetminus B)+d(B) \leq \lld(A) + d(B)$.
\end{proof}

We can see that the expression from Lemma \ref{4:LMM7} appears also in the definition of $d_*$
(it is equal to $\sum_{i=1}^{p}d(A_i) - \sum_{j=1}^{q}d(B_j)$
for a special case $p = q =1$). To prove Theorem \ref{lld=di} it suffices to show that every
such a difference of two sums can be transformed to this simple case.

Let $A = \{a_1 < a_2 < \ldots\}$ be infinite and $m \in \N$.
Define a set $B = \{b_1 < b_2 < \ldots\}$, where  $b_i$ is an
arbitrary number from the set $\{m a_i + 1, m a_i + 2, \dots, m
a_i + m\}$. We will call the set of this kind an {\em $m$-copy of
$A$\/}. Then it is easy to see that $\lld(A) = m\cdot \lld(B)$.
We have also $\lim\limits_{n \to \infty} \frac{A(n)}{B(n)} = m$, thus
$d(A) = m\cdot d(B)$ whenever $A \in \mcl{D}$. Let us note that by
\cite[Theorem 1]{st} $\mu(A)=m\cdot\mu(B)$ holds for any
density measure as well.

\begin{proof}[Proof of Theorem \ref{lld=di}]
Let
\begin{gather}\label{kchiA}
k \chi_A + \sum_{j=1}^{q}\chi_{B_j} \geq \sum_{i=1}^{p}\chi_{A_i}.
\end{gather}
Put $l = \sum_{j=1}^{q}\chi_{B_j}$ and  $r = \sum_{i=1}^{p}\chi_{A_i}$.
Let $m \geq \max\limits_{n \in \N}(k\chi_A(n) + l(n))$.
Taking
\begin{gather*}
\begin{aligned}
C &= \bigcup_{n\in\N} \{m n+1, m n +2, \ldots, mn+l(n)\}\\
D &= \bigcup_{n\in A} \{mn+l(n)+1, mn+l(n)+2, \ldots, mn+l(n)+k\},\\
E &= \bigcup_{n\in\N} \{m n+1, m n +2, \ldots, mn+r(n)\}
\end{aligned}
\end{gather*}
 we get from \refeq{kchiA}
$C\cup D\supseteq E$.

The sets $C$, $D$, $E$ can be viewed as a disjoint union of
$m$-copies of the sets $B_1, B_2, \dots, B_q$, a disjoint union of
$k$ $m$-copies of the set $A$ and a disjoint union of $m$-copies
of the sets $A_1, A_2, \dots, A_p$, respectively. Hence we have
$d(C) = \frac{1}{m}\sum\limits_{i=1}^q d(B_i)$, $\lld(D) =
\frac{k}{m} \cdot \lld(A)$
 and
$d(E) = \frac{1}{m}\sum\limits_{i=1}^p d(A_i)$.
Thus by Lemma \ref{4:LMM7}:
\begin{gather*}
d(E)-d(C)\leq \lld(D),\\
\frac{\sum_{i=1}^{n}d(A_i) - \sum_{j=1}^{m}d(B_j)}{m} \leq \frac{k}{m} \cdot \lld(A),\\
\frac{\sum_{i=1}^{n}d(A_i) - \sum_{j=1}^{m}d(B_j)}{k} \leq \lld(A).
\end{gather*}
Hence, $d_*(A) \leq \lld(A)$. From \refeq{lld<di} we have the reverse inequality, so we get
$d_*(A) = \lld(A)$. The dual equality $d^*(A) = \uud(A)$ follows from
$d_*(\N \smallsetminus A) = \lld(\N \smallsetminus A)$.
\end{proof}

The simplification obtained in Theorem \ref{lld=di} applied to the
results of \cite[Proposition 3.2.8]{brbr} yields the
following:

\begin{prop}
If $A,B\subset\N$ and $A\cap B=\emps$, then
$$\lld(A)+\lld(B) \leq \lld(A\cup B) \leq \lld(A)+\uud(B) \leq \uud(A\cup B) \leq \uud(A)+\uud(B).$$

If $A\cap B=\emps$ and $A\cup B\in\mcl D$, then
   $$d(A\cup B)=\lld(A)+\uud(B).$$

If $A\in\mcl D$, $A\cap B=\emptyset$, then $$\uud(A\cup B)=d(A)+\uud(B).$$
\end{prop}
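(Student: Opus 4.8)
The plan is to prove the three inequalities/equalities in turn, leaning on the additivity results for $\lld$ already established (especially Lemmas \ref{4:LMM6} and \ref{4:LMM7}) together with the duality between $\lld$ and $\uud$. The key structural fact I would exploit is that $\uud$ and $\lld$ are dual via complementation: since $\uud(X) = \inf\{d(C);\, C\supseteq X,\, C\in\mcl D\}$ and $d(\N\sm C) = 1 - d(C)$, one checks directly that $\uud(X) = 1 - \lld(\N\sm X)$. With this identity in hand, every statement about $\uud$ can be converted into a statement about $\lld$, where the additive machinery of the previous section applies.

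First I would establish the chain of inequalities in the display. The leftmost inequality $\lld(A)+\lld(B)\leq\lld(A\cup B)$ is a general superadditivity: given $B_0\subseteq A$ and $B_1\subseteq B$ in $\mcl D$ witnessing $\lld(A)$ and $\lld(B)$ (these exist by Lemma \ref{4:LMM2}), the disjoint union $B_0\cup B_1\subseteq A\cup B$ lies in $\mcl D$ with density $d(B_0)+d(B_1)=\lld(A)+\lld(B)$, giving the bound. For the second inequality $\lld(A\cup B)\leq\lld(A)+\uud(B)$, I would again pick $B_0\subseteq A$ with $d(B_0)=\lld(A)$ and a superset $C\supseteq B$, $C\in\mcl D$, with $d(C)=\uud(B)$; then $A\cup B\subseteq A\cup C = B_0\cup(A\sm B_0)\cup C$, and an application of Lemma \ref{4:LMM7} in the form $\lld((A\sm B_0)\cup C)\le\lld(A\sm B_0)+d(C)$, combined with Lemma \ref{4:LMM6}, should collapse this to $\lld(A)+\uud(B)$. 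The symmetric inequality $\lld(A)+\uud(B)\le\uud(A\cup B)$ follows by applying the second inequality to the complementary decomposition and using the duality identity, and the rightmost inequality $\uud(A\cup B)\le\uud(A)+\uud(B)$ is simply the superadditivity of $\lld$ rewritten under complementation.

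For the second assertion, when $A\cup B\in\mcl D$ we have $\lld(A\cup B)=\uud(A\cup B)=d(A\cup B)$, so the chain of inequalities forces every term between them to equal $d(A\cup B)$; in particular $\lld(A)+\uud(B)=d(A\cup B)$, which is exactly the claim. The third assertion, $\uud(A\cup B)=d(A)+\uud(B)$ when $A\in\mcl D$, is the dual of Lemma \ref{4:LMM6}: writing $\uud(A\cup B)=1-\lld(\N\sm(A\cup B))=1-\lld((\N\sm A)\sm B)$ and noting that $\N\sm A\in\mcl D$ is disjoint from $B$, a direct application of Lemma \ref{4:LMM6} to the set $(\N\sm A)\cap(\N\sm B)$ expressed as a difference yields the result after unwinding the complements.

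The main obstacle I anticipate is the bookkeeping in the middle inequality $\lld(A\cup B)\le\lld(A)+\uud(B)$: one must choose the witnessing sets $B_0$ and $C$ so that the sets fed into Lemma \ref{4:LMM7} genuinely belong to $\mcl D$ and cover $A\cup B$, and then verify that the additive decomposition via Lemma \ref{4:LMM6} applies to the \emph{disjoint} pieces. The cleanest route is probably to cite \cite[Proposition 3.2.8]{brbr} for the abstract inequalities and merely translate them through Theorem \ref{lld=di}, as the statement preceding the proposition suggests; but if a self-contained argument is wanted, the duality identity $\uud=1-\lld(\N\sm\cdot)$ together with Lemmas \ref{4:LMM6} and \ref{4:LMM7} supplies everything, the only care being to ensure disjointness is preserved under the complementations.
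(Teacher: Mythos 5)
The paper gives no proof of this proposition at all: it is presented as an immediate consequence of \cite[Proposition 3.2.8]{brbr} translated through Theorem \ref{lld=di} (which identifies $\lld$ with $d_*$ and $\uud$ with $d^*$). Your self-contained derivation from Lemmas \ref{4:LMM2}, \ref{4:LMM6}, \ref{4:LMM7} and the duality $\uud(X)=1-\lld(\N\sm X)$ is therefore a genuinely different route, and it does go through: superadditivity comes from taking disjoint witnessing subsets $B_0\subseteq A$, $B_1\subseteq B$ in $\mcl D$; the inequality $\lld(A\cup B)\leq\lld(A)+\uud(B)$ follows by choosing $C\supseteq B$ in $\mcl D$ with $d(C)=\uud(B)$ and computing $\lld(A\cup B)\leq\lld(A\cup C)=d(C)+\lld(A\sm C)\leq\uud(B)+\lld(A)$ via Lemma \ref{4:LMM6} and monotonicity (your insertion of $B_0$ into this step is unnecessary, and the cited form of Lemma \ref{4:LMM7} is really an instance of Lemma \ref{4:LMM6}); the squeeze argument for the second assertion is exactly right. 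What your approach buys is independence from the general theory of charges, using only the paper's own elementary lemmas; what the citation buys is brevity.

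Two details need repair. First, the duality pairing is swapped: putting $X=\N\sm(A\cup B)$, superadditivity applied to the disjoint pair $(A,X)$ gives $\lld(A)+\lld(X)\leq\lld(\N\sm B)$, i.e.\ $\lld(A)+\uud(B)\leq\uud(A\cup B)$, so the third inequality is dual to the \emph{first}; while $\lld(X\cup A)\leq\lld(X)+\uud(A)$ gives $\uud(A\cup B)\leq\uud(A)+\uud(B)$, so the fourth is dual to the \emph{second} --- the reverse of what you wrote. As written, ``apply the second inequality to the complements'' would not produce the third inequality. Second, in the last assertion the claim that $\N\sm A$ is disjoint from $B$ is false (it contains $B$); the correct application of Lemma \ref{4:LMM6} is to the disjoint decomposition $\N\sm B=A\cup\big(\N\sm(A\cup B)\big)$ with $A\in\mcl D$, which yields $1-\uud(B)=d(A)+1-\uud(A\cup B)$ and hence $\uud(A\cup B)=d(A)+\uud(B)$. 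With these corrections your argument is complete.
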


It is easy to find examples showing that the above inequalities can be strict.

As an application of the lemmas used in the proof of Theorem \ref{lld=di} we prove some
other interesting properties of $\lld$ and $\uud$ and of density measures.

%

\begin{prop}
If $A\subseteq B$ and $B\in\mcl D$, then there exists $C\in\mcl D$ such that $d(C)=\uud(A)$
and $A\subseteq C\subseteq B$.

Similarly, if $A\subseteq B$ and $A\in\mcl D$, then there exists $C\in\mcl D$ such that
$d(C)=\lld(B)$ and $A\subseteq C\subseteq B$.
\end{prop}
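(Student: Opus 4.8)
The plan is to reduce the first statement to the two sandwiching lemmas already established, and then to obtain the second statement by passing to complements. First I would invoke Lemma \ref{4:LMM2} to produce a density superset $C_0 \supseteq A$ with $C_0 \in \mcl D$ and $d(C_0) = \uud(A)$; this $C_0$ need not lie inside $B$, and fitting it inside $B$ is the only real issue to overcome. Since $B \in \mcl D$ is itself a density superset of $A$, the definition of $\uud$ gives $d(C_0) = \uud(A) \le d(B)$.

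The key step is to correct $C_0$ so that it fits inside $B$ while keeping its density and still containing $A$. The naive candidate $C_0 \cap B$ contains $A$ and lies in $B$, but $\mcl D$ is not closed under intersection, so it need not have a density. This is exactly the situation repaired by Lemma \ref{4:LMM1}: applied to the pair $C_0, B \in \mcl D$ (using the remark following that lemma to cover the case $d(C_0) = d(B)$), it yields a set $C \in \mcl D$ with $C_0 \cap B \subseteq C \subseteq B$ and $d(C) = d(C_0) = \uud(A)$. Because $A \subseteq C_0$ and $A \subseteq B$, we have $A \subseteq C_0 \cap B \subseteq C$, so $C$ satisfies $A \subseteq C \subseteq B$, $C \in \mcl D$ and $d(C) = \uud(A)$, as required. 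The main obstacle, then, is precisely the failure of $\mcl D$ to be closed under intersection, and Lemma \ref{4:LMM1} is the tool that circumvents it.

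For the second statement I would dualize. From $A \subseteq B$ with $A \in \mcl D$ we get $\N \sm B \subseteq \N \sm A$ with $\N \sm A \in \mcl D$, so the first part supplies $C' \in \mcl D$ with $\N \sm B \subseteq C' \subseteq \N \sm A$ and $d(C') = \uud(\N \sm B)$. Using the complementation identity $\uud(\N \sm B) = 1 - \lld(B)$ (the same relation between $\lld$ and $\uud$ under complementation used in the proof of Theorem \ref{lld=di}), the set $C = \N \sm C'$ satisfies $A \subseteq C \subseteq B$, $C \in \mcl D$ and $d(C) = 1 - d(C') = \lld(B)$. The only point needing care is this identity, whose verification is routine: density supersets of $\N \sm B$ correspond to density subsets of $B$ via $E \mapsto \N \sm E$, turning the defining infimum for $\uud(\N \sm B)$ into $1$ minus the defining supremum for $\lld(B)$.
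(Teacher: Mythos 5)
Your proposal is correct and follows essentially the same route as the paper: Lemma \ref{4:LMM2} produces a density superset of $A$ with density $\uud(A)$, Lemma \ref{4:LMM1} (together with the remark covering the case of equal densities) squeezes it into $B$ while containing $A$, and the second part is obtained by complementation. The extra details you supply (the inequality $\uud(A)\le d(B)$ and the identity $\uud(\N\sm B)=1-\lld(B)$) are exactly the points the paper leaves implicit, and they check out.
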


\begin{proof}
By Lemma \ref{4:LMM2} there exists $D\in\mcl D$ such that $d(D)=\uud(A)$ and $A\subseteq D$. Clearly,
$d(D)=\uud(A)\leq \uud(B)=d(B)$. By Lemma \ref{4:LMM1} there exists $C$ such that
$A \subseteq D \cap B \subseteq C \subseteq B$ and
$d(C)=d(D)=\uud(A)$.

The second part is dual to the first one.
\end{proof}

\begin{lem}\label{4:LM2B}
If $A,B\subseteq\N$, and $\lim\limits_{n \to \infty} \frac{B(n)-A(n)}n$
exists and $\lim\limits_{n \to \infty} \frac{B(n)-A(n)}n
>0$, then there is a set $D$ such that $A\cap B\subseteq D\subseteq B$ with $\lim\limits_{n \to
\infty} \frac{D(n)-A(n)}n=0$. In particular, $B\smallsetminus D\in\mcl D$.
\end{lem}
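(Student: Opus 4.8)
The plan is to reduce this statement to the disjoint case already established in Lemma~\ref{4:LMM1B}. First I would separate the overlap from the private parts: put $C = A\cap B$, $A' = A\smallsetminus B$ and $B' = B\smallsetminus A$, so that $A = C\cup A'$ and $B = C\cup B'$ are disjoint decompositions and, in addition, $A'\cap B' = \emptyset$. Since the counting functions of disjoint sets add, $A(n) = C(n)+A'(n)$ and $B(n) = C(n)+B'(n)$, and hence
\begin{gather*}
B(n)-A(n) = B'(n)-A'(n)
\end{gather*}
for every $n$. Consequently the limit $\lim_{n\to\infty}\frac{B'(n)-A'(n)}n$ exists and equals $L := \lim_{n\to\infty}\frac{B(n)-A(n)}n > 0$. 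This is the whole point of the reduction: passing to the private parts leaves the difference unchanged.

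Next, the pair $A',B'$ now satisfies the hypotheses of Lemma~\ref{4:LMM1B}, namely disjointness together with $\lim_{n\to\infty}\frac{B'(n)-A'(n)}n = L > 0$. I would invoke that lemma to obtain a subset $D'\subseteq B'$ with $\lim_{n\to\infty}\frac{D'(n)-A'(n)}n = 0$, and then set $D = C\cup D'$. Because $D'\subseteq B'$ is disjoint from $C$, the union is disjoint, so $D(n) = C(n)+D'(n)$ and therefore $D(n)-A(n) = D'(n)-A'(n)$, which gives the desired conclusion $\lim_{n\to\infty}\frac{D(n)-A(n)}n = 0$. The two required inclusions are then immediate: $C = A\cap B\subseteq D$, while $C\subseteq B$ and $D'\subseteq B'\subseteq B$ yield $D\subseteq B$.

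For the final assertion, since $D\subseteq B$ we have $(B\smallsetminus D)(n) = B(n)-D(n) = B'(n)-D'(n)$, and the two limits above give $\lim_{n\to\infty}\frac{B'(n)-D'(n)}n = L$, so $B\smallsetminus D\in\mcl D$ with $d(B\smallsetminus D) = L$. I do not expect any genuine obstacle here, as the argument is essentially bookkeeping; the only step requiring care is tracking the disjointness of $C$, $A'$, $B'$ (and of $C$, $D'$) so that all the counting functions split additively and the inclusion $A\cap B\subseteq D\subseteq B$ comes out exactly as stated.
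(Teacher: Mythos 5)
Your proof is correct and follows exactly the paper's approach: the paper's own (one-line) proof is precisely this reduction, applying Lemma~\ref{4:LMM1B} to $A':=A\smallsetminus(A\cap B)$ and $B':=B\smallsetminus(A\cap B)$ and reassembling with $D=(A\cap B)\cup D'$. You have simply written out the bookkeeping that the paper leaves implicit.
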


\begin{proof}
Use Lemma \ref{4:LMM1B} for $B':=B\smallsetminus A\cap B$ and $A':=A\smallsetminus A\cap B$.
\end{proof}

\begin{prop}
Let $A,B\subseteq\N$. There exists the limit $\lim\limits_{n \to \infty} \frac{B(n)-A(n)}n=L$
\iaoi $\mu(B)-\mu(A)=L$ for every density measure $\mu$.
\end{prop}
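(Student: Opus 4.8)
The two implications are of quite different character, so I would treat them separately.

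The implication from right to left I would prove using only the ultrafilter density measures. For a free ultrafilter $\FF$ the map $\mu_\FF(X)=\flim X(n)/n$ is a density measure, and since the sequence $\frac{B(n)-A(n)}{n}$ is bounded its $\FF$-limit exists; by linearity of the $\FF$-limit, $\mu_\FF(B)-\mu_\FF(A)=\flim\frac{B(n)-A(n)}{n}$. Hence if $\mu(B)-\mu(A)=L$ holds for every density measure, it holds in particular for every $\mu_\FF$, so $\flim\frac{B(n)-A(n)}{n}=L$ for all free ultrafilters $\FF$. By the basic property of the $\FF$-limit asserting that $\lim x_n=L$ if and only if $\flim x_n=L$ for every free ultrafilter, this forces $\lim\frac{B(n)-A(n)}{n}=L$.

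For the forward implication I would first reduce to disjoint sets: by finite additivity $\mu(B)-\mu(A)=\mu(P)-\mu(Q)$, where $P=B\setminus A$ and $Q=A\setminus B$ are disjoint and $\lim\frac{P(n)-Q(n)}{n}=L$. The central observation is that a finitely additive $\mu$ extends to a positive linear functional on simple functions, sending $\sum_l c_l\chi_{E_l}$ to $\sum_l c_l\mu(E_l)$. Consequently any pointwise inequality $k(\chi_P-\chi_Q)+\sum_j\chi_{B_j}\le\sum_i\chi_{A_i}$ with $A_i,B_j\in\mcl D$ and $k\ge1$ integrates, using $\mu|_{\mcl D}=d$, to $\mu(P)-\mu(Q)\le\frac1k\big(\sum_i d(A_i)-\sum_j d(B_j)\big)$, and dually for the reverse bound. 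Thus it suffices to produce, for every $\eps>0$, such finite families of density sets whose ratio is at most $L+\eps$ (and symmetric families giving at least $L-\eps$).

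When $L>0$ Lemma \ref{4:LM2B} peels off a genuine density-$L$ piece: it yields $D$ with $A\cap B\subseteq D\subseteq B$, with $B\setminus D\in\mcl D$ of density $L$ and $\lim\frac{D(n)-A(n)}{n}=0$, which reduces everything to the balanced case $L=0$ (for $L<0$ one swaps $P$ and $Q$). In the balanced case the dominating families must be built by hand from the relation $\frac{P(n)-Q(n)}{n}\to0$: I would match $P$ greedily into $Q$ (feasible up to an $o(n)$ deficit because $P(n)\le Q(n)+o(n)$), pairing each matched point of $P$ with a strictly smaller point of $Q$ so that the dipole contributions carry nonpositive mean, and covering the unmatched remainder, of upper density at most $\eps$, by a density set obtained as in Lemma \ref{4:LMM2}. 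The main obstacle is controlling the cost of this domination: since $Q$ may contain long density-zero stretches, the matching has unbounded displacement, so converting the pointwise balance into density sets of total density genuinely $\le\eps$ requires exploiting the balance at every scale (in the spirit of the $m$-copy rescaling in the proof of Theorem \ref{lld=di}) rather than the individual bounds $\lld,\uud$, which only pin $\mu(P)-\mu(Q)$ to an interval of positive length around $L$.
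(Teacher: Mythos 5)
Your right-to-left implication is correct and is exactly the paper's argument: evaluate on the ultrafilter measures $\mu_{\FF}$, use linearity of the $\FF$-limit, and conclude via the characterization of ordinary limits as the common value of all $\FF$-limits.

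The forward implication, however, has a genuine gap. Your reduction is sound up to the point where everything rests on the balanced case: two disjoint sets $P,Q$ with $\lim_{n\to\infty}\frac{P(n)-Q(n)}{n}=0$ must satisfy $\mu(P)=\mu(Q)$ for \emph{every} density measure $\mu$. This is the entire content of the forward direction, and you do not prove it: you propose a greedy matching of $P$ into $Q$ together with a domination by finite families of density sets, and then you yourself identify the obstruction (unbounded displacement of the matching across density-zero stretches of $Q$, so that the ``dipole'' families cannot obviously be assembled into density sets of total density $\le\eps$) without resolving it. Naming the obstacle is not the same as overcoming it; as written, the key inequality $\mu(P)-\mu(Q)\le\eps$ is never established. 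Note also that the duality you invoke (positivity of $\mu$ on simple functions plus $\mu|_{\mcl D}=d$) guarantees only that $\mu(P)-\mu(Q)$ lies in the interval $[d_*,d^*]$ attached to the signed function $\chi_P-\chi_Q$; showing that this interval degenerates to $\{0\}$ is precisely the hard step. The paper avoids this entirely: after applying Lemma \ref{4:LM2B} to split off a set $B\sm D\in\mcl D$ of density $L$, it quotes an external result (Proposition 3.3 of \cite{SLEZZIMDENSLEVY}) asserting that $\lim_{n\to\infty}\frac{D(n)-A(n)}{n}=0$ forces $\mu(D)=\mu(A)$ for every density measure --- a fact proved there via the L\'evy-group invariance of density measures, not by a domination argument. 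To complete your proof you would either need to import that result as well, or actually carry out the multiscale construction you gesture at, which is a nontrivial piece of work and not a routine verification.
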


\begin{proof}
If $\lim\limits_{n \to \infty} \frac{B(n)-A(n)}n=L$ then, by Lemma \ref{4:LM2B}, there exists
a subset $D\subset B$ with $\limti n \frac{D(n)-A(n)}n=0$. This implies $\mu(D)=\mu(A)$ for every
density measure $\mu$ by \cite[Proposition 3.3]{SLEZZIMDENSLEVY}. From this we get
$$\mu(B)=\mu(D)+\mu(B\sm D)=\mu(A)+d(B\sm D)=\mu(A)+L.$$

On the other hand, if $\mu(B)-\mu(A)=L$ for each density measure $\mu$, then also
$$\flim \frac{B(n)-A(n)}n=L$$
for every free ultrafilter $\mcl{F}$. This implies that the only cluster point of the sequence
$\left(\frac{B(n)-A(n)}n\right)$ is $L$ and
$$\limti n \frac{B(n)-A(n)}n=L.$$
\end{proof}

\begin{rem}\label{REMPOLYA}
The functions $\lld$ and $\uud$ were studied also by P\'olya
\cite{polya} in a~more general setting. He has studied sequences
of non-negative real numbers such that the difference of successive
elements is bounded from 0. We will use his result only for sequences
of natural numbers.
Among other things he proved in \cite[Satz VIII]{polya} that
\begin{gather*}
\lld(A) = \lim_{\theta \to 1^-} \liminf_{n \to \infty} \frac{A(n) - A(\theta n)}{n - \theta n},
\qquad
\uud(A) = \lim_{\theta \to 1^-} \limsup_{n \to \infty} \frac{A(n) - A(\theta n)}{n - \theta n}.
\end{gather*}
P\'olya called these values minimal and maximal density (Minimaldichte and Maximaldichte).
This expression of $\lld$ and $\uud$ can serve as a basis for a different proof of Corollary \ref{dmvalues}. 

Finitely additive measures on $\N$ can be understood as positive normed functionals on $\ell_\infty$.
Clearly, if we identify a subset of $\N$ with its characteristic sequence, such
a functional yields a measure on $\N$. The functional corresponding to a measure is in fact the integral
with respect to this measure (obtained by imitating the definition of Riemann integral,
see e.g.~\cite[Section 3]{vandouwen}). A more detailed exposition into representation of
finitely additive measures as the elements of the dual space $\ell_\infty^*$ can be found in
\cite{blumlinger}.
From the positivity and $\mu(\N)=1$ we see that the norm of each measure on $\N$ in $\ell_\infty^*$ is equal to 1.

Suppose we are given a set $A\subseteq\N$.
Now, for a given $\theta<1$, choose a sequence $(n_i)$ such that
$$\limti i \frac{A(n_i)-A(\theta n_i)}{n_i-\theta n_i} = \liminf_{n \to \infty} \frac{A(n) - A(\theta n)}{n - \theta n}.$$
Let $\FF$ be any free ultrafilter containing the set $\{n_i; i\in\N\}$. Then
$$\mu_\theta(B)=\flim \frac{B(n)-B(\theta n)}{n-\theta n} =
\flim \left(\frac{B(n)}n \frac n{n-\theta n}+\frac{B(\theta n)}{\theta{n}} \left(1-\frac n{n-\theta{n}}\right)\right)$$
defines a density measure on $\N$ such that $$\mu_\theta(A)=\liminf\limits_{n \to \infty} \frac{A(n) - A(\theta n)}{n - \theta n}.$$

We consider all these measures as the elements of the unit ball
of $\ell^*_\infty$. By Banach-Alaoglu theorem this ball is compact
in weak${}^*$ topology. Let us choose a sequence $\mu_{\theta_k}$
such that $\limti k\theta_k=1$ and $0<\theta_k<1$. Then the sequence
$(\mu_{\theta_k})$ has a subsequence which is convergent in the weak${}^*$
topology. We denote this subsequence by $(\mu_n)$ and the limit by $\mu$.
The convergence in weak${}^*$ topology implies that
$$\mu(A)=\limti n \mu_n(A) = \lld (A).$$

The proof that there exists a density measure with $\mu(A)=\uud(A)$ is similar.
Together with the convexity of the set of density measures and the obvious estimates
$\lld(A)\le\mu(A)\le\uud(A)$ this yields Corollary \ref{dmvalues}.
\end{rem}

\section{Density measures with a given value for some set}

Corollary \ref{dmvalues} gives the complete answer to the question
of Fey about the values of density measure. But this answer is
only existential. The natural question arisen here is: {\em Which
values are attained by density measures expressible in some
``simple'' form, e.g. \refeq{new repr}?}

\begin{lem}\label{meas lemma1}
Consider $\alpha \geq -1$ and $A \subseteq \N$. Then the set of all cluster points of the
sequence $\Big(\frac{A_\alpha(n)}{\N_\alpha(n)}\Big)$ is the whole interval
$\big[\lda{\alpha}(A), \uda{\alpha}(A)\big]$.
\end{lem}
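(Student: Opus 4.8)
The plan is as follows. Every cluster point of a bounded sequence lies between its $\liminf$ and its $\limsup$, and both of these extreme values are themselves cluster points; here they are precisely $\lda{\alpha}(A)=\liminf_n \frac{A_\alpha(n)}{\N_\alpha(n)}$ and $\uda{\alpha}(A)=\limsup_n \frac{A_\alpha(n)}{\N_\alpha(n)}$. So one inclusion, together with the attainment of the two endpoints, is automatic. The actual content is that every \emph{intermediate} value is also a cluster point. I would derive this from a single structural fact about the sequence $x_n := \frac{A_\alpha(n)}{\N_\alpha(n)}$, namely that its consecutive increments vanish: $x_{n+1}-x_n \to 0$. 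Once this is established, the set of cluster points is forced to be an interval, and being closed with the prescribed endpoints it must equal $[\lda{\alpha}(A),\uda{\alpha}(A)]$.

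First I would estimate the increments explicitly. Writing $a=A_\alpha(n)$, $N=\N_\alpha(n)$, $b=(n+1)^\alpha$ and $c=\chi_A(n+1)\in\{0,1\}$, the recursions $A_\alpha(n+1)=a+cb$ and $\N_\alpha(n+1)=N+b$ give
$$x_{n+1}-x_n=\frac{a+cb}{N+b}-\frac{a}{N}=\frac{b(cN-a)}{N(N+b)}.$$
Since $0\le a\le N$ and $c\in\{0,1\}$, we have $\abs{cN-a}\le N$, so $\abs{x_{n+1}-x_n}\le \frac{b}{N+b}\le \frac{(n+1)^\alpha}{\N_\alpha(n)}$. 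It then remains to check that this bound tends to $0$. For $\alpha>-1$, Lemma \ref{alpha int} yields $\N_\alpha(n)\sim \frac{n^{\alpha+1}}{\alpha+1}$, so $\frac{(n+1)^\alpha}{\N_\alpha(n)}$ is of order $\frac1n\to 0$; for $\alpha=-1$ the same lemma gives $\N_{-1}(n)\sim \ln n\to\infty$ while the numerator $(n+1)^{-1}\to 0$, so the bound again vanishes. Hence $x_{n+1}-x_n\to 0$ for all $\alpha\ge -1$.

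Next I would invoke the elementary principle that a bounded real sequence $(x_n)$ with $x_{n+1}-x_n\to 0$ has its set of cluster points equal to $[\liminf x_n,\limsup x_n]$. To see that an arbitrary $y$ with $\liminf x_n<y<\limsup x_n$ is a cluster point, fix $\eps>0$, choose $M$ so that $\abs{x_{n+1}-x_n}<\eps$ for all $n\ge M$, and use that $x_n<y$ infinitely often and $x_n>y$ infinitely often. Choosing indices $M<n_1<n_2$ with $x_{n_1}<y<x_{n_2}$, increments of size below $\eps$ force some index in $(n_1,n_2]$ with $\abs{x_n-y}<\eps$; since $\eps$ and $M$ were arbitrary, $y$ is a cluster point. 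Applying this to $x_n=\frac{A_\alpha(n)}{\N_\alpha(n)}$ completes the argument.

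The only genuine work lies in the increment estimate of the second paragraph; the rest is the standard observation that vanishing increments make the cluster set connected. I expect no serious obstacle, the single point demanding care being the separate treatment of $\alpha=-1$, where $\N_\alpha(n)$ grows only logarithmically — yet even there the numerator $(n+1)^{-1}$ decays, so the ratio still tends to $0$.
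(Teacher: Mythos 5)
Your proof is correct and follows essentially the same route as the paper: both reduce the claim to showing that the consecutive differences $\frac{A_\alpha(n+1)}{\N_\alpha(n+1)}-\frac{A_\alpha(n)}{\N_\alpha(n)}$ tend to $0$ (using Lemma \ref{alpha int}, with the same separate treatment of $\alpha=-1$), and then conclude that the cluster set is the full interval $[\lda{\alpha}(A),\uda{\alpha}(A)]$. The only difference is cosmetic: the paper cites the A\v{s}i\'c--Adamovi\'c theorem for the fact that vanishing increments force a connected cluster set, whereas you prove that elementary fact directly via the crossing argument.
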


\begin{proof}
The proof of this lemma is based on \cite[Theorem 1]{aa} which claims that if a~sequence
$(x_n)$ in a compact metric space $(X, d)$ satisfies
\begin{gather*}
\lim_{n \to \infty}d(x_n, x_{n+1}) = 0,
\end{gather*}
then the set of all cluster points of $(x_n)$ is connected.
The connectedness of the set of cluster points of $\Big(\frac{A_\alpha(n)}{\N_\alpha(n)}\Big)$
is equivalent to the assertion of the lemma.

As $0 \leq \frac{A_\alpha(n)}{\N_\alpha(n)} \leq 1$ for all $n \in \N$, it suffices to show that
$\lim\limits_{n \to \infty}\abs{\frac{A_\alpha(n)}{\N_\alpha(n)}-
\frac{A_\alpha(n+1)}{\N_\alpha(n+1)}}=0$.
Assume that $\alpha > -1$.
Then we have
\begin{gather*}
\begin{aligned}
\left| \frac{A_\alpha(n)}{\N_\alpha(n)} - \frac{A_\alpha(n+1)}{\N_\alpha(n+1)}\right|
&\leq \left| \frac{A_\alpha(n)}{\N_\alpha(n)} - \frac{A_\alpha(n)}{\N_\alpha(n+1)}\right|
+ \frac{(n+1)^\alpha}{\N_\alpha(n+1)}\\
&= \frac{A_\alpha(n)}{\N_\alpha(n)} \left| 1 - \frac{{\N_\alpha(n)}}{\N_\alpha(n+1)}\right|
+ \frac{(n+1)^\alpha}{(n+1)^{\alpha+1}}\, \frac{(n+1)^{\alpha+1}}{\N_\alpha(n+1)}
\end{aligned}
\end{gather*}
Now, using Lemma \ref{alpha int}, it can be easily seen that
$\lim\limits_{n \to \infty}
\abs{\frac{A_\alpha(n)}{\N_\alpha(n)} - \frac{A_\alpha(n+1)}{\N_\alpha(n+1)}} = 0$.

Analogous treatment can be used also for $\alpha = -1$. The only
difference is replacing the term $(n+1)^{\alpha+1}$ by $\ln(n+1)$
in the last part of the above estimation.
\end{proof}

\begin{cor}\label{meas lemma2}
Let $A \subseteq \N$ and $\alpha \geq -1$. For every
$x \in \big[\lda{\alpha}(A), \uda{\alpha}(A)\big]$ there is a~free ultrafilter $\FF$ such that
$\mu_\alpha^\FF(A) = x$.
\end{cor}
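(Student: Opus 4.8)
The plan is to read the statement off from Lemma \ref{meas lemma1} together with the elementary properties of the $\FF$-limit collected in the (unlabelled) first lemma of Section~1, the only genuine work being the passage from a free filter to a free \emph{ultra}filter. First I would invoke Lemma \ref{meas lemma1}: since the set of cluster points of $\big(\frac{A_\alpha(n)}{\N_\alpha(n)}\big)$ is exactly the interval $[\lda{\alpha}(A), \uda{\alpha}(A)]$, any prescribed value $x$ in this interval is a cluster point of that sequence. Part (vi) of the first lemma of Section~1 then produces a free filter $\mcl G$ on $\N$ along which the sequence converges to $x$; that is, for every $\eps>0$ the set $\{n;\, \abs{\frac{A_\alpha(n)}{\N_\alpha(n)}-x}<\eps\}$ belongs to $\mcl G$.

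Next I would refine $\mcl G$ to a free ultrafilter. By Zorn's lemma $\mcl G$ is contained in some ultrafilter $\FF$, and because $\mcl G$ is free (its members have empty intersection) any ultrafilter refining it is again free: if $\FF$ were fixed at a point $n_0$, then $\{n_0\}\in\FF$ would have to meet every $F\in\mcl G$, contradicting the existence of some $F\in\mcl G$ with $n_0\notin F$. The key point is then the monotonicity of filter-limits under refinement: since $\mcl G\subseteq\FF$, every set witnessing the $\mcl G$-limit already lies in $\FF$, so the sets $\{n;\, \abs{\frac{A_\alpha(n)}{\N_\alpha(n)}-x}<\eps\}$ belong to $\FF$ for all $\eps>0$. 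Hence $\flim \frac{A_\alpha(n)}{\N_\alpha(n)}=x$, i.e. $\mu_\alpha^\FF(A)=x$, as required.

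The main (and essentially only) obstacle is this refinement step, and it is mild: one must check both that refining a free filter keeps it free and that it preserves an already-existing filter-limit. A fully self-contained alternative, bypassing part (vi) altogether, would be to extract from the cluster point $x$ a subsequence $\big(\frac{A_\alpha(n_k)}{\N_\alpha(n_k)}\big)$ converging to $x$, set $S=\{n_k;\, k\in\N\}$, and take any free ultrafilter $\FF$ with $S\in\FF$. Then for each $\eps>0$ the set $\{n_k;\, \abs{\frac{A_\alpha(n_k)}{\N_\alpha(n_k)}-x}<\eps\}$ is cofinite in $S$, hence lies in $\FF$ (a free ultrafilter contains every cofinite set), forcing $\flim \frac{A_\alpha(n)}{\N_\alpha(n)}=x$ once more. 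Either route reduces the corollary to the connectedness statement already established in Lemma \ref{meas lemma1}.
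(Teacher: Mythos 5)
Your proposal is correct and, in substance, matches the paper's own proof: the paper likewise invokes Lemma \ref{meas lemma1} to see that $x$ is a cluster point, extracts an index set $K=\{n_1<n_2<\dots\}$ along which the sequence converges to $x$, and takes any free ultrafilter containing $K$ — exactly your ``self-contained alternative.'' Your primary route via part (vi) and ultrafilter refinement is only a cosmetic variant of the same idea, and both the freeness and limit-preservation checks you supply are sound.
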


\begin{proof}
According to Lemma \ref{meas lemma1}, $x$ is a cluster point of the
sequence $\Big(\frac{A_\alpha(n)}{\N_\alpha(n)}\Big)$. Hence there
is an (infinite) set $K = \{n_1 < n_2 < \dots \}$ such that $x =
\lim\limits_{k \to \infty} \frac{A_\alpha(n_k)}{\N_\alpha(n_k)}$.
Taking any free ultrafilter $\FF$ containing the set $K$ one can
easily show that $\mu_{\alpha}^{\FF}(A) = x$. This completes the
proof.
\end{proof}

The following result follows from Rajagopal \cite{rajagopal}.

\begin{thm}\label{alpha < beta}
Let $-1 \leq \alpha \leq \beta$. Then for all $A \subseteq \N$ we have
\begin{gather*}
\lda{\beta}(A) \leq \lda{\alpha}(A) \leq \uda{\alpha}(A) \leq \uda{\beta}(A).
\end{gather*}

\end{thm}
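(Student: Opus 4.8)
The plan is to exhibit the $\alpha$-quotient $\frac{A_\alpha(n)}{\N_\alpha(n)}$ as a convex combination (a positive, regular averaging) of the $\beta$-quotients $\frac{A_\beta(k)}{\N_\beta(k)}$, $1\le k\le n$, and then to apply the classical squeezing property of such averages. This is precisely the mechanism behind Rajagopal's comparison of Riesz means; I find it cleanest to make the averaging explicit by summation by parts. A pleasant feature is that all four asserted inequalities then emerge simultaneously (the middle one being the trivial $\liminf\le\limsup$), so lower and upper densities need not be handled separately.

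Set $\gamma=\beta-\alpha\ge 0$. Since $\chi_A(k)k^\alpha=k^{-\gamma}\bigl(A_\beta(k)-A_\beta(k-1)\bigr)$ with $A_\beta(0)=0$, Abel summation yields
\begin{gather*}
A_\alpha(n)=n^{-\gamma}A_\beta(n)+\sum_{k=1}^{n-1}\bigl(k^{-\gamma}-(k+1)^{-\gamma}\bigr)A_\beta(k),
\end{gather*}
together with the same identity for $A=\N$. The decisive point is a sign: because $\gamma\ge 0$, the map $k\mapsto k^{-\gamma}$ is nonincreasing, so every coefficient $k^{-\gamma}-(k+1)^{-\gamma}$ is nonnegative. (It is essential that one writes the \emph{lower}-index sum in terms of the higher-index partial sums; the reverse expansion produces coefficients of mixed sign and is useless here.) Writing $t_k=\frac{A_\beta(k)}{\N_\beta(k)}\in[0,1]$ and substituting $A_\beta(k)=t_k\N_\beta(k)$, I obtain
\begin{gather*}
\frac{A_\alpha(n)}{\N_\alpha(n)}=\sum_{k=1}^{n}w_{n,k}\,t_k,\qquad w_{n,k}\ge 0,\quad \sum_{k=1}^{n}w_{n,k}=1,
\end{gather*}
where the row-sum identity is just the $A=\N$ version divided by $\N_\alpha(n)$.

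It remains to check regularity and run the squeeze. For each fixed $k$ the numerator of $w_{n,k}$ is constant once $n>k$, while $\N_\alpha(n)\to\infty$ for every $\alpha\ge -1$ by Lemma \ref{alpha int}; hence $w_{n,k}\to 0$ as $n\to\infty$. Given $\eps>0$, choose $K$ with $t_k\ge \liminf_j t_j-\eps$ for $k\ge K$; splitting the sum at $K$ and using $t_k\in[0,1]$, the finitely many columns $k<K$ contribute a vanishing amount while $\sum_{k\ge K}w_{n,k}\to 1$, so $\liminf_n\frac{A_\alpha(n)}{\N_\alpha(n)}\ge \liminf_j t_j-\eps$. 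Letting $\eps\to 0$ gives $\lda{\beta}(A)=\liminf_j t_j\le \lda{\alpha}(A)$, and the symmetric argument with $t_k\le 1$ gives $\uda{\alpha}(A)\le \uda{\beta}(A)=\limsup_j t_j$. Combined with the trivial $\lda{\alpha}(A)\le\uda{\alpha}(A)$, this is the claim.

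I expect the only real obstacle to be conceptual rather than computational: identifying the correct direction of the averaging so that the coefficients are nonnegative (equivalently, recognizing that increasing the weight exponent \emph{widens} the density interval). The boundedness $t_k\in[0,1]$, which is automatic here, is exactly what lets me dispose of the initial columns in the regularity step; without a uniform bound the Toeplitz squeeze could fail.
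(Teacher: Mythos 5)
Your proof is correct. Note, though, that the paper offers no proof of this theorem at all: it simply records the statement as a consequence of Rajagopal's limit theorems on the comparison of weighted (Riesz-type) means. What you have done is make the mechanism underlying that citation explicit and self-contained. The Abel-summation identity $A_\alpha(n)=n^{-\gamma}A_\beta(n)+\sum_{k=1}^{n-1}\bigl(k^{-\gamma}-(k+1)^{-\gamma}\bigr)A_\beta(k)$ with $\gamma=\beta-\alpha\ge 0$, together with the same identity for $\N$, does exhibit $\frac{A_\alpha(n)}{\N_\alpha(n)}$ as $\sum_k w_{n,k}t_k$ with $w_{n,k}\ge 0$ and row sums $1$; the only regularity needed is that each fixed column tends to $0$, which holds because the numerator of $w_{n,k}$ is eventually constant in $n$ while $\N_\alpha(n)\to\infty$ for every $\alpha\ge -1$ (Lemma \ref{alpha int} covers $\alpha>-1$, and $\N_{-1}(n)\sim\ln n$ covers $\alpha=-1$); the diagonal weight $w_{n,n}$ need not vanish, but your squeeze only splits at a fixed column index, so this is harmless. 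Your argument correctly isolates the two places where the hypotheses enter: $\alpha\le\beta$ gives nonnegativity of the coefficients, and boundedness $t_k\in[0,1]$ disposes of the initial columns. What your route buys over the paper's is a short, elementary, fully verifiable proof in place of an external reference; what it costs is nothing, since it is in substance the standard proof of Rajagopal's comparison theorem specialized to characteristic functions.
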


This led us to introduce the following notation:
\begin{gather*}
\begin{aligned}
\lda{\infty}(A) &= \lim\limits_{\alpha \to \infty} \lda{\alpha}(A) =
\inf\limits_{\alpha \geq -1} \lda{\alpha}(A);\\
\uda{\infty}(A) &= \lim\limits_{\alpha \to \infty} \uda{\alpha}(A) =
\sup\limits_{\alpha \geq -1} \uda{\alpha}(A).
\end{aligned}
\end{gather*}

\begin{thm}
If $A \subseteq \N$ and $x \in \big(\lda{\infty}(A), \uda{\infty}(A)\big)$, then there is
a density measure $\mu$ of the form \refeq{new repr} such that $\mu(A) = x$.
\end{thm}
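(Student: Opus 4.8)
The plan is to reduce to a single exponent-ultrafilter pair and then realize the resulting measure as a point mass in the representation \refeq{new repr}. The essential structural fact is the monotonicity of Theorem \ref{alpha < beta}: as $\alpha$ grows, $\lda{\alpha}(A)$ decreases and $\uda{\alpha}(A)$ increases, so the intervals $\big[\lda{\alpha}(A),\uda{\alpha}(A)\big]$ are nested and increasing, and their union as $\alpha \to \infty$ exhausts the open interval $\big(\lda{\infty}(A),\uda{\infty}(A)\big)$.

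First I would locate a suitable exponent. Fix $x$ with $\lda{\infty}(A) < x < \uda{\infty}(A)$. Since $\lda{\infty}(A) = \inf_{\alpha \geq -1}\lda{\alpha}(A) < x$, there is an $\alpha_1 \geq -1$ with $\lda{\alpha_1}(A) < x$; since $\uda{\infty}(A) = \sup_{\alpha \geq -1}\uda{\alpha}(A) > x$, there is an $\alpha_2 \geq -1$ with $\uda{\alpha_2}(A) > x$. Setting $\alpha_0 = \max\{\alpha_1,\alpha_2\} \geq -1$ and using Theorem \ref{alpha < beta}, I obtain $\lda{\alpha_0}(A) \leq \lda{\alpha_1}(A) < x$ and $\uda{\alpha_0}(A) \geq \uda{\alpha_2}(A) > x$, hence $x \in \big[\lda{\alpha_0}(A),\uda{\alpha_0}(A)\big]$.

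Next I would invoke Corollary \ref{meas lemma2} for this $\alpha_0$ to produce a free ultrafilter $\FF$ with $\mu_{\alpha_0}^{\FF}(A) = x$. It remains only to display $\mu_{\alpha_0}^{\FF}$ in the form \refeq{new repr}, and for this I would take $\psi$ to be the Dirac measure $\delta_{(\FF,\alpha_0)}$ concentrated at the point $(\FF,\alpha_0) \in \Omega = \beta\N^* \times [-1,\infty)$. As singletons are closed, hence Borel, in the Hausdorff space $\Omega$, this $\psi$ is a legitimate probability Borel measure, and evaluation against a point mass gives
$$\int_{\Omega}\mu_\alpha^{\FF}(A)\,\dd\psi(\FF,\alpha) = \mu_{\alpha_0}^{\FF}(A) = x,$$
so $\mu = \mu_{\alpha_0}^{\FF}$ is a density measure of the required form with $\mu(A) = x$.

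I expect no serious obstacle; the argument is short and essentially a matter of assembling earlier results. The one step deserving care is the passage from the strict inequalities $\lda{\infty}(A) < x < \uda{\infty}(A)$ to membership in a single interval $\big[\lda{\alpha_0}(A),\uda{\alpha_0}(A)\big]$, which is precisely where both the strictness of the hypothesis and the monotonicity of Theorem \ref{alpha < beta} are used; this also explains why the statement is restricted to the open interval, since the endpoints are in general attained only in the limit $\alpha \to \infty$ and need not lie in any single $\big[\lda{\alpha}(A),\uda{\alpha}(A)\big]$. A secondary, purely notational point is that a point mass is an admissible choice of $\psi$, so that the single measure $\mu_{\alpha_0}^{\FF}$ already qualifies as being of the form \refeq{new repr} and no genuine averaging over $\Omega$ is required.
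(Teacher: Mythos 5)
Your proposal is correct and follows essentially the same route as the paper's own (much terser) proof: use the monotonicity of the intervals $\big[\lda{\alpha}(A),\uda{\alpha}(A)\big]$ in $\alpha$ to locate a single $\alpha_0$ with $x\in\big[\lda{\alpha_0}(A),\uda{\alpha_0}(A)\big]$, then apply Corollary \ref{meas lemma2} to get a free ultrafilter realizing $x$. Your explicit use of a Dirac measure $\delta_{(\FF,\alpha_0)}$ to exhibit $\mu_{\alpha_0}^{\FF}$ in the form \refeq{new repr} is a detail the paper leaves implicit, and is a welcome clarification rather than a deviation.
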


\begin{proof}
By definition of $\lda{\infty}(A)$ and $\uda{\infty}(A)$ there is
an $\alpha \geq -1$ such that $x \in \big[\lda{\alpha}(A),
\uda{\alpha}(A)\big]$. The rest follows from Corollary \ref{meas
lemma2}.
\end{proof}

Using Theorem \ref{first_estim} we get

\begin{cor}\label{lld < ld infty}
For all $A \subseteq \N$ we have
\begin{gather*}
\lld(A) \leq \lda{\infty}(A) \leq \uda{\infty}(A) \leq \uud(A).
\end{gather*}
\end{cor}

Using Corollary \ref{dmvalues} and Corollary \ref{lld < ld infty}
we are able to show that the expression \refeq{new repr} does not
describe all density measures. There are also density measures of
different type:

Again, taking the set $A = \bigcup\limits_{k=0}^\infty
\big(2^{2k}, 2^{2k + 1}\big] \cap \N$ we have $\lld(A) =
\lda{\infty}(A) = 0$. On the other hand, $\mu_\alpha^\FF(A) \geq
\frac{1}{2^{\alpha+1}+1} > 0$. Hence $\mu(A) = \int_{\Omega}
\mu_\alpha^\FF(A)\, \dd\psi(\FF, \alpha)> 0$ for any probability
Borel measure $\psi$, too. But by Corollary \ref{dmvalues} there
is also a density measure $\mu'$ with $\mu'(A) = \lld(A) = 0$.

We recall the definition of gap density and some results from
\cite{grv}. The value of the gap density $\lambda(A)$ describes
how large gaps can be between elements of $A$. It is given by
$$\lambda(A) = \limsup\limits_{n\to\infty} \frac{a_{n+1}}{a_n}$$
for $A=\{a_1<a_2<a_3<\ldots\}$. The sets having infinite
gap density are called thin sets in \cite{bfpr}.

It is shown in \cite{grv} that the density set
$S(A)=\{(\ld(B),\ud(B)); B\subseteq A\}$ is located above the line
$y=\lambda(A)x$. It follows, that if $\lambda(A)>1$, then
$\ud(B)\geq\lambda(A)\ld(B)>\ld(B)$ for any subset $B$ of $A$ with
$\ld(B)>0$ (see also \cite[Proposition 2.1]{gst}). Hence no subset
of $A$ has density strictly greater than 0 and $\lld(A)=0$.

On the other hand, if $\lambda(A)>1$, then there are arbitrary large $n$'s with
$a_{n+1}>(1+\eps)a_n+1$.
Thus if $b_n=\uip{(1+\eps)a_n}$ we get $A_\alpha(b_n)=A_\alpha(a_n)$ and
$b_n\geq (1+\eps)a_n$. Hence
$(\alpha+1)\frac{A_\alpha(b_n)}{b_n^{\alpha+1}} =
(\alpha+1)\frac{A_\alpha(a_n)}{b_n^{\alpha+1}}\leq
\frac1{(1+\eps)^{\alpha+1}}\frac{(\alpha+1)\N_\alpha(a_n)}{{a_n}^{\alpha+1}}$ and
by Lemma \ref{alpha int} we get
$\lda{\alpha}(A)\leq\frac1{(1+\eps)^{\alpha+1}}$. Consequently $\lda{\infty} (A)=0$.

\begin{prop}
If $\lambda(A)>1$, then $\lld(A) = \lda{\infty}(A)=0$.
\end{prop}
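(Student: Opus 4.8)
The plan is to prove the two equalities $\lld(A)=0$ and $\lda{\infty}(A)=0$ separately; both are consequences of the gap condition $\lambda(A)>1$ through the observations gathered in the discussion preceding the statement.

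First I would handle $\lld(A)=0$ using the Grekos--Volkmann description of the density set $S(A)=\{(\ld(B),\ud(B));\, B\subseteq A\}$, which lies above the line $y=\lambda(A)x$. Since $\lambda(A)>1$, any subset $B\subseteq A$ with $\ld(B)>0$ then satisfies $\ud(B)\geq\lambda(A)\ld(B)>\ld(B)$, so such a $B$ has no asymptotic density and cannot lie in $\mcl D$. Hence every $B\in\mcl D$ with $B\subseteq A$ has $d(B)=\ld(B)=0$, and taking the supremum over all such $B$ gives $\lld(A)=0$.

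For $\lda{\infty}(A)=0$ I would exploit the large multiplicative gaps directly. Fixing $\eps>0$ with $1+\eps<\lambda(A)$, the definition $\lambda(A)=\limsup a_{n+1}/a_n$ supplies infinitely many indices $n$ with $a_{n+1}>(1+\eps)a_n+1$; for each such $n$ I set $b_n=\uip{(1+\eps)a_n}$, so that $(a_n,b_n]$ meets $A$ in no point and therefore $A_\alpha(b_n)=A_\alpha(a_n)$ while $b_n\geq(1+\eps)a_n$. Replacing $\N_\alpha$ by its leading term via Lemma \ref{alpha int}, the quotient $A_\alpha(b_n)/\N_\alpha(b_n)$ is, in the limit, at most $(1+\eps)^{-(\alpha+1)}$ times $A_\alpha(a_n)/\N_\alpha(a_n)\leq 1$. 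Since $\lda{\alpha}(A)$ is a liminf, passing along the subsequence $(b_n)$ yields $\lda{\alpha}(A)\leq(1+\eps)^{-(\alpha+1)}$ for every $\alpha>-1$, and letting $\alpha\to\infty$ forces $\lda{\infty}(A)=\inf_{\alpha\geq-1}\lda{\alpha}(A)=0$.

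The only delicate point is the estimate in the last paragraph, where the error terms from Lemma \ref{alpha int} must be tracked and the case $\alpha=-1$ handled separately (with $\ln(n+1)$ replacing the power, exactly as in Lemma \ref{meas lemma1}); I would also double-check that $b_n<a_{n+1}$, which is precisely what the strict inequality $a_{n+1}>(1+\eps)a_n+1$ guarantees and which makes $(a_n,b_n]$ disjoint from $A$. I do not expect a genuine obstacle here, since each ingredient is a direct application of a result already in place.
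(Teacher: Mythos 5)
Your proof is correct and takes essentially the same route as the paper: the Grekos--Volkmann description of the density set yields $\lld(A)=0$, and the gap argument with $b_n=\uip{(1+\eps)a_n}$ together with Lemma \ref{alpha int} gives $\lda{\alpha}(A)\le (1+\eps)^{-(\alpha+1)}$, whence $\lda{\infty}(A)=0$. (The separate treatment of $\alpha=-1$ you flag is not actually needed here, since the infimum over $\alpha$ is already forced to $0$ by letting $\alpha\to\infty$.)
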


The question of the equality of $\lld(A)$
and $\lda{\infty}(A)$ for a set $A$ with $\lambda(A) = 1$ remains open.

\begin{prob}
Is it true that $\lld(A) = \lda{\infty}(A)$ for every $A \subseteq \N$?
\end{prob}

\noindent\textbf{Acknowledgement:} Authors are grateful to Professor Georges Grekos for
helpful discussion on density sets. We are also greatly indebted to an anonymous referee
for pointing out that our Corollary \ref{dmvalues} can be deduced from Polya's results
(see Remark \ref{REMPOLYA}).


\end{document}